\let\ge\geqslant
\def\dsum_#1_#2{\sum_{{#1}\atop {#2}}}
\def\rar{\rightarrow}
\newcommand{\good}{$G$-disjoint}
\newcommand{\decent}{closed connected orientable}
\newcommand{\act}{\cdot}
\newcommand{\killed}{\gamma}
\newcommand{\compl}{\gamma'}
\newcommand{\meria}{\cM}
\newcommand{\paraa}{\cL}
\newcommand{\centre}{{\rm Z}}
\newcommand{\bilin}{\phi}
\newcommand{\permchar}[1]{\pi(#1)}
\newcommand{\grpalg}{\Q[G]}
\newcommand{\intgrpalg}{\Z[G]}
\newcommand{\regmod}[1]{{{}_{#1}{#1}}}
\newcommand{\regmodg}{\regmod{\grpalg}}
\newcommand{\Mat}{{\rm M}}
\newcommand{\origM}{M}
\newcommand{\drilledM}[2]{{#1}(#2)}
\newcommand{\filledM}[2]{{#1}(#2)}
\newcommand{\emb}{\varphi}
\newcommand{\dotsth}[1]{f_{#1}}
\newcommand{\dotlambda}{\dotsth{\lambda}}
\newcommand{\dotmu}{\dotsth{\mu}}
\title{Group representations in the homology of $3$-manifolds}
\author{Alex Bartel}
\address{School of Mathematics and Statistics, University of Glasgow,
Glasgow G12 8QQ, Scotland, UK}
\author{Aurel Page}
\address{INRIA, Univ. Bordeaux, CNRS, IMB, UMR5251, F-33400 Talence, France}
\email{alex.bartel@glasgow.ac.uk, aurel.page@inria.fr}
\begin{document}
\maketitle
\begin{abstract}
If $M$ is a manifold with an action of a group $G$, then
the homology group $H_1(M,\Q)$ is naturally a $\grpalg$-module, where
$\grpalg$ denotes the rational group ring. We prove that for every finite
group~$G$, and for every $\grpalg$-module $W$, there exists a closed
hyperbolic~$3$-manifold $M$ with a free $G$-action such that the $\grpalg$-module
$H_1(M,\Q)$ is isomorphic to $W$. We give an application to spectral geometry:
for every
finite set~$\cP$ of prime numbers, there exist hyperbolic~$3$-manifolds
$N$ and $N'$ that are strongly isospectral such that for all $p\in \cP$,
the $p$-power torsion subgroups of $H_1(N,\Z)$ and of $H_1(N',\Z)$ have
different orders.
The main geometric techniques are Dehn surgery and, for the spectral application,
the Cheeger-M\"uller formula, but we also make use of tools from
different branches of algebra, most notably of regulator
constants, a representation theoretic tool that was
originally developed in the context of elliptic curves.

\end{abstract}

\section{Introduction}
\subsection{Group actions on rational homology of $3$-manifolds}
If $M$ is a manifold with an action by a group $G$, then the homology
of $M$ carries a natural $G$-action. The $G$-module structure
of integral and rational homology can often be used to deduce information
about the manifold, see e.g. \cite{Edmonds,Sikora}.

In this paper, we investigate the $G$-module structure of the rational
homology of $3$-manifolds.
In \cite{CooperLong} Cooper and Long prove that for every finite group $G$,
there exists a hyperbolic rational homology $3$-sphere with a free $G$-action.
In fact, their method proves a stronger statement. Let $\grpalg$ denote
the group algebra of $G$ over the field $\Q$ of rational numbers. Cooper and Long
define the notion of a canonical $\grpalg$-module, and prove that every direct sum
of canonical $\grpalg$-modules can be realised as $H_1(M,\Q)$ for a closed
$3$-manifold $M$ with a free $G$-action.


In the present paper, we generalise the theorem of Cooper and Long to
arbitrary $\grpalg$-modules.
Our main result is the following.

\begin{theorem}\label{thm:intromain}
Let $G$ be a finite group, and let $W$ be a finitely generated $\grpalg$-module.
Then there exists a \decent~$3$-manifold $M$ with a free
orientation preserving $G$-action, such that the $\grpalg$-module $H_1(M,\Q)$ is isomorphic to~$W$.
\end{theorem}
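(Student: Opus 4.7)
The plan is to reduce Theorem~\ref{thm:intromain} to the theorem of Cooper and Long by an equivariant Dehn surgery argument. First, I choose a finitely generated $\Q[G]$-module $U$ that is realisable by Cooper--Long's construction (i.e.\ a direct sum of canonical $\Q[G]$-modules) and that surjects onto $V$; for instance one may take a sufficiently large direct sum of copies of the regular representation $\Q[G]$, under the (separately verified) assumption that $\Q[G]$ is among Cooper--Long's canonical modules, so that every finitely generated $\Q[G]$-module arises as a quotient of some $\Q[G]^n$. Set $K := \ker(U \twoheadrightarrow V)$. Apply Cooper--Long to produce a closed hyperbolic $3$-manifold $M_0$ with a free $G$-action and a $\Q[G]$-module isomorphism $H_1(M_0,\Q) \cong U$. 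The theorem then reduces to modifying $M_0$ so as to kill exactly the submodule of $H_1(M_0,\Q)$ corresponding to $K$, while preserving both hyperbolicity and the free $G$-action.

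The modification is carried out by equivariant Dehn surgery. I would select simple closed geodesics $\gamma_1,\ldots,\gamma_r$ in $M_0$ whose $G$-translates are pairwise disjoint and whose rational homology classes $\{[g\cdot\gamma_i]\}_{g\in G,\,i}$ span the submodule of $H_1(M_0,\Q)$ corresponding to $K$. Performing the same Dehn filling slope on every curve in a single $G$-orbit yields an equivariant surgery; the resulting manifold $M$ inherits a free $G$-action, and Thurston's hyperbolic Dehn surgery theorem guarantees that for all but finitely many choices of slopes $M$ is hyperbolic. Each Dehn filling introduces into $H_1$ an integral relation whose rational class is a nonzero multiple of $[\gamma_i]$, so after passing to rational homology the new relations generate precisely the $\Q[G]$-submodule $K$, yielding $H_1(M,\Q) \cong U/K \cong V$ as $\Q[G]$-modules.

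The main obstacle is the geometric step of producing a $G$-equivariant system of pairwise disjoint simple closed geodesics whose homology classes span a \emph{prescribed} $\Q[G]$-submodule of $H_1(M_0,\Q)$. Closed geodesics are abundant in a hyperbolic $3$-manifold, so hitting a single given homology class is straightforward, but arranging disjointness of all $G$-translates simultaneously with control on the full homological span is delicate; a plausible approach is to realise generators of $K$ by loops, straighten to geodesics, and successively perturb and push translates apart in general position, possibly at the cost of enlarging $U$ further to absorb the ambient homology created by the perturbation. A secondary technicality is ensuring that the relations produced by the surgery kill exactly $K$ and nothing more; because we work rationally this reduces, once the geodesics are in place, to a finite-dimensional linear-algebra check, since any nonzero surgery coefficient kills a nonzero rational multiple of the class of the core curve.
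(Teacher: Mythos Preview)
Your strategy is the natural ``subtractive'' dual of the paper's: realise $U\cong\Q[G]^n$ via Cooper--Long and then kill $K=\ker(U\to V)$ by equivariant Dehn surgery, whereas the paper starts from a rational homology sphere and \emph{adds} one simple summand at a time. The step you call a ``secondary technicality'' is in fact the heart of the problem, and your justification of it is not correct. A $(p,q)$-filling on $G\cdot\gamma_i$ produces
\[
H_1(M',\Q)=H_1(N,\Q)\big/\Q[G]\, i_*(p\mu+q\lambda),\qquad N=M_0\setminus G\cdot\nu(\gamma_i);
\]
it is \emph{not} $H_1(M_0,\Q)$ modulo a scalar multiple of $[\gamma_i]$. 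One has $H_1(M_0,\Q)/\Q[G][\gamma_i]=H_1(N,\Q)/\bigl(\Q[G]i_*\mu+\Q[G]i_*\lambda\bigr)$, so the surgery kills exactly $\Q[G][\gamma_i]$ only if the single element $p\,i_*\mu+q\,i_*\lambda$ generates all of $\operatorname{im}i_*\cong\Q[G]$, i.e.\ is a unit. Now $i_*\mu$ is nonzero in $H_1(N,\Q)$ precisely when $\Q[G][\gamma_i]$ is a proper quotient of $\Q[G]$ --- exactly the case of interest --- and then the available elements $p\,i_*\mu+q\,i_*\lambda$, with $p,q$ mere integers, sweep out only a $2$-dimensional $\Q$-plane inside $\Q[G]$, which need not meet the unit group (already in $M_2(\Q)$ one can have $a,b$ with $M_2(\Q)a+M_2(\Q)b=M_2(\Q)$ but every $pa+qb$ singular). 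Without control over how $\gamma_i$ links its own $G$-translates you have no handle on $i_*\mu$, so the ``linear-algebra check'' cannot be passed by fiat. This obstruction is exactly why Cooper--Long's method, which is essentially your killing step, reaches only canonical modules.

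The paper's contribution is to manufacture the missing control. It first adds a fresh copy of $\Q[G]$ to $H_1$; it then \emph{builds} the surgery torus $\mathfrak t$ by hand inside a $G$-disjoint handlebody, winding it through the handles according to the coefficients of an idempotent $e$ with $\Q[G]e\cong V$, so that $\ker i_*=\Q[G](1-e^*)\mu+\Q[G]e\lambda$ is known explicitly; and the algebraic Proposition~\ref{prop:goodidempotent}, namely $\Q[G]=\Q[G]e\oplus\Q[G](1-e^*)$, is exactly the statement that for this $\mathfrak t$ the $(1,1)$-slope \emph{is} a unit, so the filling removes the auxiliary $\Q[G]$ and leaves $V$ behind. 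By contrast, the obstacle you identify as primary --- arranging disjoint $G$-translates of curves in prescribed homology classes --- is routine in dimension~$3$: pass to $M_0/G$, put the image curve in general position, and lift (this is Lemma~\ref{lem:good}). Hyperbolicity is indeed handled at the end via Thurston's Dehn surgery theorem, as you say, but on a single auxiliary link rather than on your $\gamma_i$.
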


Like Cooper and Long, we can also strengthen Theorem \ref{thm:intromain} by
ensuring that $M$ is hyperbolic -- see Theorem \ref{thm:bodymain}.

\begin{remark}
The referees inform us that there is a way of proving Theorem~\ref{thm:intromain}
by using higher dimensional surgery. The proof we will present here will only
use surgery in dimension $3$.
\end{remark}

\subsection{Application to isospectral manifolds}
As an application of Theorem \ref{thm:intromain} we prove a result on
torsion homology of isospectral manifolds. This relies on our previous work \cite{us}
on the interplay between Sunada's construction of isospectral manifolds \cite{Sunada}
and the Cheeger--M\"uller theorem.

Recall that two Riemannian manifolds
$M$ and $M'$ are said to be \emph{strongly isospectral} if the spectra of
every natural (see
  \cite[Section II, paragraph before Examples 3]{Pesce} for a precise
  definition) self-adjoint elliptic differential
operator on~$M$ and~$M'$ agree.
There is a large body of literature devoted to investigating which topological
or geometric invariants of manifolds are strongly isospectral invariants, see
\cite{Schueth,GordonSchueth,gordon} for surveys. Strongly isospectral Riemannian
manifolds necessarily have the same dimension, the same volume, and the same
Betti numbers, but for example they may have non-isomorphic real cohomology
rings, see \cite{Lauret}. Ikeda has shown that (strongly) isospectral closed
$3$-manifolds with constant positive curvature are necessarily isometric \cite{Ikeda}.
In contrast, we show in the present paper that
strongly isospectral hyperbolic $3$-manifolds can have very different integral
homology groups. If $p$ is a prime number, and $A$ is an Abelian group, let
$A[p^\infty]$ denote the subgroup of $A$ of elements of $p$-power order.

\begin{theorem}\label{thm:introsiso}
Let $\cP$ be a finite set of prime numbers. Then
there exist \decent~$3$-manifolds $M$ and $M'$ that are strongly isospectral with
respect to hyperbolic metrics and such that
\begin{enumerate}[leftmargin=*]
\item for all $p\in \cP$ we have
$$
\#H_1(M,\Z)[p^\infty]\neq \#H_1(M',\Z)[p^\infty];
$$
\item for all prime numbers $q\not\in \cP$ we have an isomorphism of Abelian groups
$$
H_1(M,\Z)[q^\infty]\cong H_1(M',\Z)[q^\infty].
$$
\end{enumerate}
\end{theorem}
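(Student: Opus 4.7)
The plan is to combine Theorem~\ref{thm:intromain} with Sunada's construction and with the torsion analysis of~\cite{us}. I would select a finite group $G$ whose prime divisors are exactly the elements of $\cP$---for instance a product $G=\prod_{p\in\cP}G_p$ of $p$-groups---together with a Gassmann pair $H,H'\le G$, that is two subgroups satisfying $\Q[G/H]\cong\Q[G/H']$ as $\grpalg$-modules. Given any $\grpalg$-module $V$ (to be specified below), Theorem~\ref{thm:intromain} produces a closed hyperbolic $3$-manifold $N$ with free $G$-action and $H_1(N,\Q)\cong V$. The quotients $M=H\lquo N$ and $M'=H'\lquo N$ are then \decent~hyperbolic $3$-manifolds, which by Sunada's theorem are strongly isospectral.

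The main input from~\cite{us} is that, in this Sunada setup, the Cheeger--M\"uller theorem turns the equality of analytic torsions of $M$ and $M'$ into a formula
$$
\frac{\#H_1(M,\Z)_{\rm tors}}{\#H_1(M',\Z)_{\rm tors}}=\cC_{H,H'}(V)\cdot(\text{an explicit regulator factor depending on $V$}),
$$
where $\cC_{H,H'}$ is the regulator-constant homomorphism of the Gassmann pair; the volume and Betti-number contributions cancel because $M$ and $M'$ share them. I would then choose $V$ so that $\cC_{H,H'}(V)$ has nontrivial $p$-adic valuation at every $p\in\cP$. Since regulator constants are multiplicative on direct sums, this is arranged prime by prime: each $p$-group factor $G_p$ supports a representation $V_p$ with $\cC_{H_p,H'_p}(V_p)$ of nontrivial $p$-adic valuation---these are the analogues of the representations that, in the original elliptic-curve setting of regulator constants, detect Tate--Shafarevich contributions---and inflating each $V_p$ along $G\twoheadrightarrow G_p$ and summing gives a single $V$ with the desired $p$-adic profile at every $p\in\cP$. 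Part~(1) then follows, after adjoining if necessary further summands of trivial regulator constant so that the free-part regulator factor does not accidentally cancel the $p$-adic contribution.

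For part~(2), if $q\notin\cP$ then $q\nmid|G|$ by the choice of $G$, so $\Z_q[G]$ is a maximal order and the idempotents $e_H=|H|^{-1}\sum_{h\in H}h$ and $e_{H'}$ lie in $\Z_q[G]$. The Gassmann condition $\Q[G/H]\cong\Q[G/H']$ lifts to an isomorphism of left $\Z_q[G]$-modules $\Z_q[G]e_H\cong\Z_q[G]e_{H'}$; combined with the transfer $H_1(M,\Z_q)\cong e_H\,H_1(N,\Z_q)$, valid because $|H|$ is invertible in $\Z_q$, this yields the required abelian-group isomorphism $H_1(M,\Z)[q^\infty]\cong H_1(M',\Z)[q^\infty]$.

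The main obstacle is the analysis underlying the displayed formula: extracting from the Cheeger--M\"uller setup of~\cite{us} a sufficiently clean expression for the torsion ratio to read off its $p$-adic valuations for each $p\in\cP$, and then producing a single $\grpalg$-module $V$ whose regulator constant simultaneously has prescribed nontrivial $p$-adic behaviour at every prime in $\cP$. The full flexibility of Theorem~\ref{thm:intromain}---realising arbitrary $\grpalg$-modules rather than only Cooper--Long's canonical modules---is precisely what makes the construction of such a $V$ possible.
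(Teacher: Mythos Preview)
Your overall architecture---a product group $G=\prod_p G_p$, a product Gassmann pair, inflated modules $V_p$ summed to $V$, realise $V$ via Theorem~\ref{thm:intromain}, then Sunada plus the regulator-constant machinery of \cite{us}---is exactly the paper's. The divergence, and the gap, is in your choice of the local pieces $G_p$.

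The paper does \emph{not} take $G_p$ to be a $p$-group for odd $p$: it uses $G_p=\GL_2(\F_p)$ with explicit Borel-type subgroups $U_p,U_p'$, and quotes from \cite{us} that $\cC_{U_p-U_p'}(I_p)\equiv p\pmod{(\Q^\times)^2}$ for a specific simple module $I_p$; for $p=2$ it uses the affine group over $\Z/8\Z$ (which happens to be a $2$-group) and a direct computation. Your assertion that ``each $p$-group factor $G_p$ supports a representation $V_p$ with $\cC_{H_p,H_p'}(V_p)$ of nontrivial $p$-adic valuation'' is the load-bearing step of your plan and is unjustified: for every odd prime $p$ you would need to exhibit a $p$-group containing a nontrivial Gassmann pair together with a module whose regulator constant has odd $p$-valuation. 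That is far from obvious, and the authors---who developed this formalism---do not claim it; they go to $\GL_2(\F_p)$ precisely because the computation there is already done in \cite{us}. The trade-off is genuine: your $p$-group choice would make part~(2) immediate via $q\nmid|G|$, whereas the paper cannot argue that way (since $|\GL_2(\F_p)|$ has many prime factors) and instead uses that each $U_p-U_p'$ is already a $\Z_{(q)}[G_p]$-relation for every $q\neq p$, feeding into \cite[Theorem~3.5]{us}.

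Your displayed formula and the hedge about ``adjoining further summands so that the free-part regulator factor does not accidentally cancel'' reflect a misreading of \cite{us}. Cheeger--M\"uller together with Proposition~\ref{prop:regquo} give an identity in $\Q^\times/(\Q^\times)^2$,
\[
\frac{\#H_1(M,\Z)_{\rm tors}}{\#H_1(M',\Z)_{\rm tors}}\;\equiv\;\cC_\Theta\bigl(H_1(X,\Q)\bigr)\pmod{(\Q^\times)^2},
\]
with no residual regulator factor. If $\cC_\Theta(V)\equiv\prod_{p\in\cP}p$, then the $p$-adic valuation of the torsion ratio is \emph{odd} for each $p\in\cP$; no rational square can cancel that, so nothing further needs to be adjoined.
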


\smallskip

\begin{remarks*}~
\begin{enumerate}[leftmargin=*]
\item We will obtain the manifolds $M$ and $M'$ in Theorem \ref{thm:introsiso}
using a construction of Sunada \cite{Sunada}, who was guided by a well-known
analogy between spectral zeta functions of manifolds and Dedekind zeta functions
of number fields.
\item 
A weak form of the number theoretic analogue of Theorem \ref{thm:introsiso} is
an old open problem \cite{deSmit1, deSmit2}: do there exist, for every prime
number $p$, number fields with the same Dedekind zeta function but with
different $p$-class numbers?
\item
Theorem \ref{thm:introsiso} certainly does not hold for $2$-manifolds, since
they have torsion-free homology, while for $4$- and higher-dimensional manifolds,
Theorem \ref{thm:introsiso} has already been known since the work of Sunada.
The $3$-dimensional case has been the only open one.

\end{enumerate}
\end{remarks*}

Let $p$ be an odd prime number. Let us briefly explain how to deduce Theorem
\ref{thm:introsiso} in the special case $\cP=\{p\}$ from Theorem \ref{thm:intromain}.
Consider the following two subgroups of the group $G=\GL_2(\F_p)$ of invertible
$2\times 2$ matrices over the finite field with $p$ elements:
$$
B=\begin{pmatrix}\F_p^\times &\F_p\\0& \F_p^\times\end{pmatrix},\;\;\;
H=\begin{pmatrix}(\F_p^\times)^2 &\F_p\\0& \F_p^\times\end{pmatrix}.
$$
We can form the permutation modules $\Q[G/H]$ and $\Q[G/B]$, which are spanned
over $\Q$ by the cosets of the respective subgroup, and where $G$ acts by
permuting the respective basis. The module $\Q[G/H]$ decomposes as a direct sum
$\Q[G/H]\cong \Q[G/B]\oplus I$,
where $I$ is a simple $\grpalg$-module of dimension $(p+1)$ over $\Q$.
The first part of Theorem \ref{thm:introsiso} for $\cP=\{p\}$
immediately follows from Theorem \ref{thm:intromain} and the following result.

\begin{lemma}[\cite{us}, Corollary~4.4]\label{lem:corus}
Let $p$ be an odd prime number, let $G=\GL_2(\F_p)$, and let $I$ be as above.
Suppose that there exists a closed $3$-manifold $X$ with a free
$G$-action, such that the multiplicity of $I$ in the $\grpalg$-module
$H_1(X,\Q)$ is odd. Then there exist \decent~Riemannian $3$-manifolds $M$ and $M'$
that are strongly isospectral and such that
$$
\#H_1(M,\Z)[p^\infty]\neq \#H_1(M',\Z)[p^\infty].
$$
If, in addition, $X$ is
hyperbolic, then $M$ and $M'$ can be chosen to be hyperbolic.
\end{lemma}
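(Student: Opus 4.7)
The plan is to produce $M$ and $M'$ as quotients of $X$ by a carefully chosen pair of subgroups of $G$, and then to translate the required torsion inequality into a statement about regulator constants by means of the Cheeger--M\"uller formula applied to the resulting isospectral pair.

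First, I would use Sunada's construction. Since the $G$-action on $X$ is free, any pair of subgroups $H_1,H_2\le G$ with $\Q[G/H_1]\cong\Q[G/H_2]$ as $\grpalg$-modules (Gassmann equivalence) gives closed hyperbolic $3$-manifolds $M=X/H_1$ and $M'=X/H_2$ that are strongly isospectral. The pair $(B,H)$ itself is not Gassmann equivalent, since $\Q[G/H]\cong\Q[G/B]\oplus I$; however, by augmenting each side with further coset spaces one can manufacture a Gassmann pair $(H_1,H_2)$ in which the ``difference'' of the two sides is still controlled by~$I$, in the sense that the integral permutation lattices $\Z[G/H_1]$ and $\Z[G/H_2]$ disagree locally at $p$ in a manner dictated by the isotypic component of~$I$.

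Second, Cheeger--M\"uller converts the isospectrality of $M$ and $M'$ into an equality of Reidemeister torsions with trivial rational coefficients. For a closed oriented hyperbolic $3$-manifold $N$, this Reidemeister torsion is $\#H_1(N,\Z)_{\mathrm{tors}}$ divided by a regulator obtained from the Poincar\'e duality pairing between $H_1(N,\Z)/\mathrm{tors}$ and $H_2(N,\Z)/\mathrm{tors}$. Expressing the ratio of the two Reidemeister torsions in terms of the $\grpalg$-module $H_1(X,\Q)$, I would decompose the contribution isotypically: each simple summand other than $I$ contributes a factor whose $p$-adic valuation vanishes by construction of the Gassmann pair, leaving the $p$-adic valuation of the torsion ratio equal to the $p$-adic valuation of the regulator constant of $I$ with respect to $(H_1,H_2)$, multiplied by the multiplicity of $I$ in $H_1(X,\Q)$.

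The heart of the argument, and the main obstacle, is to arrange the subgroups $H_1,H_2$ so that the regulator constant of $I$ itself has odd $p$-adic valuation; this is a purely representation-theoretic computation in $\GL_2(\F_p)$ that forces the specific choice of augmentation in step one. Once this is in place, the hypothesis that $I$ occurs with odd multiplicity in $H_1(X,\Q)$ makes the total $p$-adic valuation of the torsion ratio odd, and in particular nonzero, so that $\#H_1(M,\Z)[p^\infty]\neq\#H_1(M',\Z)[p^\infty]$.
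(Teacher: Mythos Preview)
Your overall strategy---Sunada's construction from a Gassmann pair, Cheeger--M\"uller to convert the torsion ratio into a regulator quotient, and regulator constants to evaluate that quotient isotypically---is exactly the route taken in the paper (Section~\ref{sec:isospectral}) and in the cited reference~\cite{us}. The gap is in the first step. You propose to ``augment'' $(B,H)$ with further coset spaces to manufacture a Gassmann pair; this is vague, and in any case unnecessary. The pair actually used is $(U_p,U_p')$ with $U_p=H$ and
\[
U_p'=\begin{pmatrix}\F_p^\times & \F_p\\ 0 & (\F_p^\times)^2\end{pmatrix},
\]
the index-$2$ subgroup of $B$ obtained by constraining the \emph{lower-right} diagonal entry rather than the upper-left one. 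One checks directly that $\Theta=U_p-U_p'$ is already a $\Q[G]$-relation (indeed a $\Z_{(q)}[G]$-relation for every prime $q\neq p$), so Sunada applies with $M=X/U_p$ and $M'=X/U_p'$; no augmentation is needed. See Example~\ref{ex:GL}.

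The representation-theoretic core you identify is then twofold: that $\cC_{\Theta}(I)\equiv p\pmod{(\Q^\times)^2}$, and that $\cC_{\Theta}(V_j)$ is trivial modulo squares for every simple $V_j\not\cong I$, so that the remaining isotypic components do not interfere. You are right that these are the crux, but they are properties of this specific $\Theta$ that must be verified (this is done in~\cite{us}; the first is restated in Example~\ref{ex:GL}), not something one arranges by a choice of augmentation. Once they are in hand, multiplicativity of regulator constants together with the odd-multiplicity hypothesis finishes the argument exactly as you describe.
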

By inspecting the construction in Lemma \ref{lem:corus}
a bit more closely, one can also deduce the second part of Theorem
\ref{thm:introsiso} from \cite[Theorem 3.5]{us}.

For concrete groups $G$ and $\Q[G]$-modules $W$, one can sometimes try to reach
the conclusion of Theorem \ref{thm:intromain} by a brute force search. In
\cite[Proposition 1.5]{us}, we were able to prove Theorem
\ref{thm:introsiso}(1) in this way when $\cP=\{p\}$ for all $p\leq 71$.

\begin{remark*}
The canonicity condition on $W$ in the construction of Cooper and Long 
can be formulated as follows. Let $\regmodg$ denote the left regular $\grpalg$-module.
Recall that every $\grpalg$-module can be uniquely written as a direct sum of
simple modules. The condition on $W$ for the method of \cite{CooperLong} to apply
is that
for every simple $\grpalg$-module $V_i$, the multiplicity of $V_i$ in $W$
be divisible by the multiplicity of $V_i$ in the regular module $\regmodg$.
Note that the multiplicity of $I$ in the regular module of $\GL_2(\F_p)$ is $p+1$,
so if $W$ is a canonical $\Q[\GL_2(\F_p)]$-module, then
the multiplicity of $I$ in $W$ is even. In particular, the result of Cooper--Long
is not sufficient to apply Lemma~\ref{lem:corus}.
\end{remark*}

The proof of the general case of Theorem \ref{thm:introsiso} will involve the
same ideas as that of the special case sketched above, but will require more
algebraic preparation, and will occupy Section \ref{sec:isospectral}.

\subsection{Ingredients of the proof}
The proof of the main theorem will be given in Section \ref{sec:geom}.
We will show how, given a $3$-manifold $M$ with a free $G$-action, and a
finitely generated $\grpalg$-module $V$, one may perform a sequence of
$G$-equivariant Dehn surgeries to produce a $3$-manifold $M'$ with a free $G$-action
such that there is an isomorphism of $\grpalg$-modules
$H_1(M',\Q)\cong H_1(M,\Q)\oplus V$ -- see Corollary \ref{cor:add};
and also how, given a $3$-manifold $M$ with a free $G$-action such that
$H_1(M,\Q)$ has a $\grpalg$-submodule isomorphic to $\regmodg$, one can ``kill''
that submodule -- see Proposition \ref{prop:removeeQG}. Starting from a $3$-manifold
with a free $G$-action and arbitrary homology, one can then iterate the above
two steps to realise any given $\grpalg$-module -- see Theorem~\ref{thm:bodymain}.

One of these surgeries is prescribed by the coefficients of an idempotent
$e\in \grpalg$ such that $\grpalg e\cong V$, and for this step to
yield the desired result, we need $e$ to satisfy a technical algebraic
condition. The following result, which we will prove in Section
\ref{sec:alg} as Corollary \ref{cor:goodidempotent}, says that all
idempotents in $\grpalg$ indeed do possess the required property.

\begin{proposition}\label{prop:goodidempotent}
Let $G$ be a finite group.
Given an element $x=\sum_{g\in G}a_gg$ of $\grpalg$, where $a_g\in \Q$,
define $x^*=\sum_{g\in G}a_gg^{-1}\in \grpalg$.
Then for every idempotent $e\in \grpalg$, we have
$\regmodg=\grpalg e + \grpalg (1-e^*)$.
\end{proposition}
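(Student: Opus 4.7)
\medskip
\noindent\textbf{Proof sketch.}
The plan is to exhibit $\Q[G](1-e^*)$ as the orthogonal complement of $\Q[G]e$ with respect to a canonical positive-definite inner product on $\Q[G]$, and then conclude using the fact that positive-definite forms split every subspace.

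First I would introduce the standard symmetric bilinear form
$$
\langle x,y\rangle \;=\; \sum_{g\in G} a_g b_g, \qquad x=\sum_g a_g g,\; y=\sum_g b_g g,
$$
which is positive definite on $\Q[G]$ (it is the restriction to $\Q[G]$ of the standard Euclidean form on $\R[G]$). The involution $*$ is then exactly the adjoint operation for right multiplication: a short computation with basis elements gives
$$
\langle xa,\,y\rangle \;=\; \langle x,\,ya^*\rangle \qquad \text{for all } x,y,a\in\Q[G].
$$
This is the only nontrivial identity needed, and it is just a reformulation of the fact that $*$ is an anti-involution fixing the group basis.

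Next I would identify the orthogonal complement of the left ideal $\Q[G]e$. Using the adjunction above,
$$
(\Q[G]e)^\perp \;=\; \{y\in\Q[G] : \langle xe, y\rangle = 0 \text{ for all } x\} \;=\; \{y : ye^* = 0\},
$$
where the last equality uses that the form is nondegenerate. Since $e$ is an idempotent, so is $e^*$, hence also $1-e^*$; in particular $\Q[G](1-e^*) = \{y : y(1-e^*) = y\} = \{y : ye^* = 0\}$. Therefore $(\Q[G]e)^\perp = \Q[G](1-e^*)$.

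Finally, because the form is positive definite, every $\Q$-subspace $U\subseteq \Q[G]$ satisfies $\Q[G] = U \oplus U^\perp$; applied to $U=\Q[G]e$ this yields the claimed decomposition $\regmodg = \Q[G]e \oplus \Q[G](1-e^*)$. I do not anticipate a serious obstacle: the whole argument hinges on noticing the adjointness relation between $*$ and the natural inner product, after which the rest is formal. The only mild subtlety is that one must use a \emph{positive-definite} form rather than merely a nondegenerate one, so that $U\cap U^\perp = 0$ is automatic — this is why the specific form $\sum a_g b_g$, rather than, say, the trace pairing $(x,y)\mapsto $ (coefficient of $1$ in $xy$), is used.
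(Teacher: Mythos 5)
Your proof is correct, and it uses the same bilinear form as the paper: your $\langle x,y\rangle = \sum_g a_g b_g$ is, up to the positive scalar $\#G$, the paper's $\tr_{\Q[G]/\Q}(xy^*)$, and both arguments finish by splitting off $(\Q[G]e)^\perp$ using positive-definiteness (which the paper calls anisotropy). The genuine difference lies in how the orthogonal complement is identified with $\Q[G](1-e^*)$. The paper proves $(Ae)^\perp = A(1-\iota(e))$ for general semisimple $K$-algebras with a non-degenerate involution by establishing the inclusion $A(1-\iota(e)) \subseteq (Ae)^\perp$ and then comparing dimensions; the dimension count rests on the auxiliary fact $\dim_K Ax = \dim_K xA$, which is itself proved by passing to a finite field extension over which the algebra becomes a product of matrix algebras. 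Your argument sidesteps all of that: the adjointness relation $\langle xa, y\rangle = \langle x, ya^*\rangle$ --- your only nontrivial computation --- gives $(\Q[G]e)^\perp = \{y : ye^* = 0\}$ directly from non-degeneracy, and the identity $\{y : ye^* = 0\} = \Q[G](1-e^*)$ is a one-line consequence of $1-e^*$ being idempotent. What you gain is a shorter, more self-contained proof; what the paper's framing makes more visible is that the decomposition is a statement about arbitrary semisimple algebras with an anisotropic involution, though in fact your adjointness identity $\bilin_\iota(xa,y)=\bilin_\iota(x,y\iota(a))$ holds at that level of generality too, so your simplification would carry over verbatim.
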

Note that if the star is dropped, then the conclusion immediately
follows from the definition of an idempotent. On the other hand, since $e$
is not assumed to be central, one does not, in general, have $e=e^*$.
Moreover, if the operator $x\mapsto x^*$ is replaced by a different involution
(see Section~\ref{sec:semisimple}) on~$\grpalg$, then there may not exist an
idempotent with the required properties at all. It is important to note that
Proposition \ref{prop:goodidempotent} does not follow from the well-known fact
that $\grpalg e^*$ is abstractly isomorphic to $\grpalg e$; see in particular
Example \ref{ex:nonisotropic}.

\subsection{Generalisations}
The main geometric step, in which we add a given $\grpalg$-module
to the homology of a given $3$-manifold with a free $G$-action actually
works in greater generality. For example, instead of a free $G$-action we may
allow an orientation preserving action by isometries with no element acting trivially,
which implies that the fixed point set under every $g\in G$ is at most
$1$-dimensional. For the precise statements, see Theorems \ref{thm:generaladd}
and~\ref{thm:general}.

This has the somewhat surprising consequence that given a \decent~Riemannian
$3$-manifold $M$ with an orientation preserving effective $G$-action,
one can infer no information about the fixed points from the structure
of~$H_1(M,\Q)$: if a certain configuration of fixed point spaces can be
realised at all, then it can be realised with $H_1(M,\Q)$ being isomorphic
to any given $\grpalg$-module. This stands in stark contrast to the
situation in dimension~$2$, as we point out in Section \ref{sec:otherdim}. See
in particular Corollary~\ref{cor:dim2}, which, vaguely speaking, says that
for \decent~surfaces~$M$ with a $G$-action, $H_1(M,\Q)$ ``knows''
everything about the fixed point structure.


In this paper, when we say ``manifold'', we will always mean a closed connected
oriented smooth manifold, all automorphisms will be orientation preserving, and
all maps between manifolds will be smooth.

\begin{acknowledgements}
We are grateful to Nathan Dunfield for making us aware of the paper of Cooper
and Long, and for his incredibly prompt and helpful responses to our questions,
to Allan Edmonds for many helpful remarks concerning the state of knowledge on
the subject and the exposition, to John Smillie and Karen Vogtmann for helpful
discussions, and to Hendrik Lenstra for giving the proof of Proposition
\ref{prop:goodidempotent} that we present here, which is much simpler than
the proof we gave in an earlier draft. We are also very grateful to an anonymous
referee for helpful comments that greatly improved the exposition!
Part of this work was done while the authors
were at the University of Warwick. We thank the Mathematics department at Warwick
for a very stimulating working environment.
Finally, we thank the EPSRC for financial
support via a First Grant, respectively the EPSRC Programme Grant EP/K034383/1
LMF: L-Functions and Modular Forms.
\end{acknowledgements}

\section{Algebras with an involution}\label{sec:alg}
In this section, we will prove Proposition \ref{prop:goodidempotent}.

\subsection{Semisimple algebras}\label{sec:semisimple}
The main reference for this subsection is \cite{Lam}.
All our rings are associative, and have a unit element, denoted by~$1$.
All our modules are left modules, and are assumed to be finitely generated.
If $R$ is a ring, then $\centre(R)$ denotes the centre of~$R$;
the regular module $\regmod{R}$ is defined as having
the same underlying additive group as $R$, and the $R$-action being given by
left multiplication.

Let $K$ be a field. A \emph{$K$-algebra} 
is a ring $A$ that is equipped with
a ring homomorphism~$K\rar \centre(A)$. All our $K$-algebras are
finite-dimensional over~$K$. 
If~$A$ is a $K$-algebra, then
the \emph{trace} $\tr_{A/K}(a)$ of an element~$a\in A$
is defined to be the trace of the endomorphism of the~$K$-vector space~$A$ given
by multiplication by~$a$ on the left.

\begin{example}\label{ex:trace}
  Let $A=\grpalg$ be the group algebra of $G$ over $\Q$, and let $a=\sum_{g\in G}a_gg$
  be an arbitrary element of $A$. Then $\tr_{A/\Q}a=(\dim_{\Q}A)\cdot a_1=\#G\cdot a_1$.
\end{example}
If $A$ is a $K$-algebra, then an $A$-module~$V$ is called \emph{simple}
if it has exactly two submodules, $0$ and~$V$; a simple submodule of~$\regmod{A}$
is the same thing as a minimal left ideal of~$A$.
The \emph{Jacobson radical} of a $K$-algebra~$A$ is the set of elements~$a\in A$
that  annihilate every simple $A$-module; it is a two-sided ideal of~$A$.
A $K$-algebra~$A$ is called \emph{semisimple} if its Jacobson radical is~$0$.
For every integer~$n\ge 1$, let~$\Mat_n(K)$ be the $K$-algebra of
$n\times n$ matrices over~$K$. We will use the following basic result.

\begin{lem}\label{lem:ssmat}
Let $K$ be a field and let $A$ be a semisimple $K$-algebra. Then there exists a
finite field extension $L/K$ such that $L\otimes_K A$ is isomorphic to a product
of algebras of the form~$\Mat_n(L)$ for integers~$n\ge 1$.
\end{lem}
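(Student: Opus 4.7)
The plan is to reduce, via Artin--Wedderburn, to the case of a single finite-dimensional division $K$-algebra, and then to invoke the existence of splitting fields for central simple algebras. Concretely, the structure theorem gives an isomorphism $A\cong \prod_{i=1}^k \Mat_{n_i}(D_i)$ with the $D_i$ finite-dimensional division $K$-algebras. Since base change commutes with finite products and with matrix algebras, and since $\Mat_{n_i}(\Mat_m(L))\cong\Mat_{n_im}(L)$, it suffices to produce, for each $i$, a finite extension $L_i/K$ such that $L_i\otimes_K D_i$ is a product of matrix algebras over $L_i$; the compositum of these $L_i$ inside a fixed algebraic closure of $K$ will then do the job for $A$.

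For a single finite-dimensional division algebra $D$ with centre $Z=\centre(D)$, a finite extension of $K$, the strategy would be as follows. Viewed as a $Z$-algebra, $D$ is central simple, so by standard theory any maximal subfield $F\subset D$ is a splitting field over $Z$, giving $F\otimes_Z D\cong \Mat_n(F)$ with $n=[F:Z]$. To upgrade this splitting from $Z$ to $K$, I would choose a finite Galois extension $L/K$ that contains $F$ and all $K$-conjugates of $Z$. Then $L\otimes_K Z\cong \prod_\sigma L$, indexed by the $K$-embeddings $\sigma:Z\hookrightarrow L$, and the identification
\[
L\otimes_K D\cong (L\otimes_K Z)\otimes_Z D\cong \prod_\sigma L\otimes_{Z,\sigma} D
\]
exhibits $L\otimes_K D$ as a product of algebras, each of which is a base change of $F\otimes_Z D\cong \Mat_n(F)$ along $L/F$, hence isomorphic to $\Mat_n(L)$.

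The only step that demands any genuine care is the factorisation $L\otimes_K Z\cong \prod_\sigma L$, which requires $Z/K$ to be separable. In characteristic zero this is automatic, which covers the setting of the paper (everything eventually takes place over $\Q$). I do not expect any serious obstacle: Artin--Wedderburn, the splitting property of maximal subfields of a central simple algebra, and the behaviour of tensor products under compositums of separable extensions are all standard material from Lam's book, which is the reference already cited in this subsection.
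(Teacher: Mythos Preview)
The paper does not actually prove this lemma; it is stated as a standard fact with \cite{Lam} as the ambient reference, and no argument is given. Your Artin--Wedderburn reduction followed by splitting each central simple factor via a maximal subfield is exactly the textbook route, and it is correct in the generality the paper needs.

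Your caveat about separability is not just bookkeeping: as the lemma is literally stated (for an arbitrary field $K$), it is false. Take $K=\bF_p(t)$ and $A=K(t^{1/p})$; this is a field, hence a semisimple $K$-algebra in the paper's sense. For any finite extension $L/K$, either $t^{1/p}\notin L$, in which case $L\otimes_K A\cong L(t^{1/p})$ is a proper field extension of $L$ and not of the form $\Mat_n(L)$, or $t^{1/p}\in L$, in which case $L\otimes_K A\cong L[x]/(x-t^{1/p})^p$ has nonzero nilpotents. So no $L$ works. The hidden hypothesis is that $A$ be a \emph{separable} $K$-algebra (equivalently, that the centres $Z_i$ of the Wedderburn factors be separable over $K$), which is automatic over a perfect field and in particular over $\Q$. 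Since the paper only invokes the lemma for $\Q$-algebras (in Lemma~\ref{lem:leftrightrk}), this is harmless for the paper's purposes, and you were right to flag it rather than pretend the argument goes through unconditionally.
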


An \emph{idempotent} in an algebra~$A$ is an element~$e\in A$ such that~$e^2=e$.
If $e\in A$ is an idempotent, then so is $1-e$, and in this case one has
a decomposition into left ideals $A= Ae\oplus A(1-e)$.
If an algebra~$A$ is semisimple, then every simple~$A$-module is isomorphic to some minimal
left ideal of~$A$, every $A$-module is a direct sum of simple
submodules, and for every left ideal~$I$ in~$A$, there exists an idempotent~$e$
in~$A$ such that~$I = Ae$.

An \emph{anti-automorphism}
of an algebra $A$ is a $K$-linear automorphism $\alpha\colon A\rar A$ 
such that~$\alpha(1)=1$ and~$\alpha(xy)=\alpha(y)\alpha(x)$ for
all $x$, $y\in A$.
An \emph{involution} on $A$ is an anti-automorphism $\iota$ such that
$\iota\circ \iota=\id$.

Let~$V$ be a finite-dimensional vector space over~$K$, equipped with a symmetric
bilinear form~$\bilin: V\times V\to K$. If~$X\subset V$ is a subset, then its orthogonal
complement is defined to be
\[
  X^\perp = \{v\in V\mid \bilin(v,x)=0\text{ for all }x\in X\}.
\]
The bilinear form $\bilin$ is called \emph{non-degenerate} if~$V^\perp=0$, and
it is called
\emph{anisotropic} if for every nonzero~$v\in V$ we have~$\bilin(v,v)\neq 0$.
Note that $\bilin$ is non-degenerate if and only if the induced map
$V\to \Hom(V,K)$ given by $v\mapsto (w\mapsto \bilin(v,w))$ is an isomorphism.
It follows that
if~$\bilin$ is non-degenerate, then for every subspace~$W\subset V$, we
have, by the rank-nullity formula,~$\dim V = \dim W + \dim W^\perp$.
If~$\bilin$ is anisotropic, then it is non-degenerate, and for every
subspace~$W\subset V$ we have~$V = W+W^\perp$.



\begin{lem}\label{lem:sym}
  Let~$A$ be a semisimple $K$-algebra, and let $\iota$ be an involution
  on~$A$. Then for all~$x\in A$ we have~$\tr_{A/K}(x) = \tr_{A/K}(\iota(x))$. In
  particular, the $K$-bilinear form~$(x,y)\mapsto \tr_{A/K}(x\iota(y))$ on~$A$ is symmetric.
\end{lem}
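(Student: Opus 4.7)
The plan is to prove the trace identity $\tr_{A/K}(x) = \tr_{A/K}(\iota(x))$ first; the symmetry of the bilinear form will then drop out formally.

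The first step is to reduce to a product of matrix algebras. Trace is compatible with base change: for any finite extension $L/K$ one has $\tr_{(L\otimes_K A)/L}(1\otimes x) = \tr_{A/K}(x)$, and $\iota$ extends $L$-linearly to an involution on $L\otimes_K A$. By Lemma \ref{lem:ssmat}, it therefore suffices to establish the trace identity when $A = \prod_i \Mat_{n_i}(L)$ for a suitable finite extension $L/K$.

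The key observation is that for any involution $\iota$ on $A$, writing $L_a,R_a\colon A\to A$ for left and right multiplication by $a$, one has the $K$-linear conjugacy $R_x = \iota\circ L_{\iota(x)}\circ \iota^{-1}$, as a direct computation using anti-multiplicativity and $\iota^2=\id$ shows. Consequently $\tr_{A/K}(\iota(x)) = \tr(L_{\iota(x)}) = \tr(R_x)$, and it remains to prove that on a product of matrix algebras the traces of left and right multiplication by any element agree. This I would check factor by factor: on $\Mat_n(L)$ both $L_M$ and $R_M$ act with trace $n\cdot \tr(M)$, where $\tr(M)$ is the usual matrix trace.

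For the symmetry of the bilinear form, I would apply the identity $\tr_{A/K}(z) = \tr_{A/K}(\iota(z))$ to $z = x\iota(y)$ and use anti-multiplicativity together with $\iota^2=\id$, obtaining $\tr_{A/K}(x\iota(y)) = \tr_{A/K}(\iota(x\iota(y))) = \tr_{A/K}(y\iota(x))$, which is the required symmetry. The substantive content is entirely in the first assertion; the only step that requires any thought is the left--right symmetry of the trace on each simple factor $\Mat_n(L)$, and even that is a direct computation, so no serious obstacle is anticipated.
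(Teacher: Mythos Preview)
Your argument is correct. The conjugacy $R_x = \iota\circ L_{\iota(x)}\circ\iota^{-1}$ holds in any algebra with involution and immediately yields $\tr(L_{\iota(x)}) = \tr(R_x)$; the semisimplicity is used exactly where you use it, to obtain $\tr(L_x)=\tr(R_x)$ via the reduction to matrix factors (and indeed the paper's own remark shows this equality fails for upper-triangular matrices). Your factor-by-factor check is legitimate because the left--right trace equality does not involve $\iota$, so a possible permutation of the simple factors by $\iota$ is irrelevant at that step.

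The paper itself does not give an argument: it simply cites Scharlau's book. So your write-up is strictly more informative than what appears in the paper, and the route you take---base change to a product of matrix algebras plus the observation that the involution conjugates left into right multiplication---is the standard way to unpack that citation. One minor stylistic point: the conjugacy identity already holds before base change, so you could state it first and then invoke Lemma~\ref{lem:ssmat} solely to verify $\tr(L_x)=\tr(R_x)$; this would make the logical dependence on semisimplicity slightly more transparent, but the mathematics is the same either way.
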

\begin{proof}
  See \cite[13.1 (iv)]{Scharlau}.
\end{proof}

\begin{remark}
  In Lemma~\ref{lem:sym} the semisimplicity assumption is necessary: let~$A$ be
  the $K$-algebra of upper-triangular $2\times 2$ matrices with coefficients
  in~$K$, equipped with the involution
  \[
  \iota\colon\begin{pmatrix}a & b \\ 0 & d\end{pmatrix}
    \mapsto \begin{pmatrix}d & -b \\ 0 & a\end{pmatrix}.
  \]
  Then~$\tr_{A/K}\begin{pmatrix}a & b \\ 0 & d\end{pmatrix} = 2a+d$, which is
  not preserved by~$\iota$.
\end{remark}

Let~$A$ be a semisimple $K$-algebra and~$\iota$ be an involution on~$A$. The
\emph{associated symmetric bilinear form} on~$A$ is
\[
  \bilin_\iota: (x,y)\mapsto \tr_{A/K}(x\iota(y)).
\]
We say that~$\iota$ is \emph{non-degenerate} (resp. \emph{anisotropic})
if~$\bilin_\iota$ is non-degenerate (resp. anisotropic).

%

\subsection{Idempotents and anisotropic involutions}\label{sec:goodidempotent}

In this subsection we prove the main algebraic result,
Proposition~\ref{prop:involidem}. The proof we give here was communicated
to us by Hendrik Lenstra, and is much simpler than the proof we gave in
an earlier draft of the paper.

\begin{lem}\label{lem:leftrightrk}
  Let~$A$ be a semisimple $K$-algebra. Then for all~$x\in A$ we have $\dim_K Ax =
  \dim_K xA$.
\end{lem}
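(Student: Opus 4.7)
The plan is to reduce to a matrix algebra over a field via extension of scalars, and then conclude by a direct computation using that row rank equals column rank over a field.

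First I would invoke Lemma~\ref{lem:ssmat} to choose a finite extension $L/K$ such that $A_L := L\otimes_K A$ is isomorphic to a product $\prod_i \Mat_{n_i}(L)$ as an $L$-algebra. Because $L\otimes_K (Ax) = A_L\cdot(1\otimes x)$ as subspaces of $A_L$, and because tensoring with $L$ preserves $K$-dimension, we get $\dim_K Ax = \dim_L A_L\cdot(1\otimes x)$, and symmetrically $\dim_K xA = \dim_L (1\otimes x)\cdot A_L$. Hence the desired equality for $(A,x)$ over $K$ is equivalent to the analogous equality for $(A_L, 1\otimes x)$ over $L$. The product decomposition of $A_L$ then reduces the problem componentwise to the case $A = \Mat_n(L)$ with $x\in A$.

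In the matrix algebra case I would finish by a direct computation. For $y\in \Mat_n(L)$, the $i$-th row of $yx$ is the $i$-th row of $y$ multiplied by $x$, so each row of an element of $\Mat_n(L)\cdot x$ lies in the row span of $x$ and can be chosen freely and independently of the other rows within that span. This identifies $\Mat_n(L)\cdot x$ with the direct sum of $n$ copies of the row space of $x$, giving $\dim_L \Mat_n(L)\cdot x = n\cdot\mathrm{rank}(x)$. The mirror computation on columns yields $\dim_L x\cdot\Mat_n(L) = n\cdot\mathrm{rank}(x)$, and the equality of row rank and column rank of a matrix over $L$ concludes the proof.

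I do not anticipate a serious obstacle. The semisimplicity hypothesis enters only to invoke Lemma~\ref{lem:ssmat} and pass to a matrix algebra; once there, the statement is elementary linear algebra. The only subtlety is the book-keeping under extension of scalars, which is immediate from the fact that $L\otimes_K-$ preserves dimensions and sends the left (respectively right) ideal generated by $x$ to the left (respectively right) ideal generated by $1\otimes x$.
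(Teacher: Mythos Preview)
Your proposal is correct and follows essentially the same approach as the paper: reduce to a product of matrix algebras over a field via extension of scalars using Lemma~\ref{lem:ssmat}, and verify the claim there. The paper's proof is terser (it simply asserts the matrix-algebra case without spelling out the row/column rank argument), but the strategy is identical.
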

\begin{proof}
  The result is true if~$A$ is a product of matrix algebras over $K$. Let $A$ be
  an arbitrary semisimple $K$-algebra.
  If $L/K$ is a finite field extension, then we have $\dim_L (L\otimes_K A)x = \dim_K Ax$,
  and similarly for $xA$. The general case of the lemma therefore follows from
  the special case and Lemma~\ref{lem:ssmat}.
\end{proof}

Note that if $A$ is a $K$-algebra with an involution $\iota$, and
$e\in A$ is an idempotent, then $\iota(e)$ is also an idempotent.

\begin{lem}\label{lem:iotaperp}
  Let~$A$ be a semisimple $K$-algebra with a non-degenerate involution~$\iota$. Then for
  every idempotent~$e\in A$ we have~$(Ae)^\perp = A(1-\iota(e))$, where the
  orthogonal complement is taken with respect to~$\bilin_\iota$.
\end{lem}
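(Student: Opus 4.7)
The plan is to prove the equality $(Ae)^\perp = A(1-\iota(e))$ by first establishing the inclusion $A(1-\iota(e)) \subseteq (Ae)^\perp$ via a direct calculation, and then showing that the two $K$-vector spaces have the same dimension.

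For the inclusion, I would take arbitrary $a, b \in A$ and compute
\[
  \bilin_\iota(a(1-\iota(e)), be) = \tr_{A/K}\bigl(a(1-\iota(e))\iota(be)\bigr)
  = \tr_{A/K}\bigl(a(1-\iota(e))\iota(e)\iota(b)\bigr).
\]
Since $\iota$ is an involution and $e$ is idempotent, $\iota(e)$ is also idempotent, so $(1-\iota(e))\iota(e) = 0$, and the whole expression vanishes. This shows $A(1-\iota(e))$ is $\bilin_\iota$-orthogonal to $Ae$.

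For the dimension count, I would first use that $1-\iota(e) = \iota(1-e)$, so $A(1-\iota(e)) = A\iota(1-e)$. Applying the $K$-linear bijection $\iota$ to this left ideal (noting $\iota(x\iota(1-e)) = (1-e)\iota(x)$) gives $\iota(A\iota(1-e)) = (1-e)A$, so
\[
  \dim_K A(1-\iota(e)) = \dim_K (1-e)A = \dim_K A(1-e) = \dim_K A - \dim_K Ae,
\]
where the second equality is Lemma~\ref{lem:leftrightrk} and the last follows from the idempotent decomposition $A = Ae \oplus A(1-e)$. On the other hand, non-degeneracy of $\bilin_\iota$ gives $\dim_K (Ae)^\perp = \dim_K A - \dim_K Ae$. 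The two dimensions agree, so the inclusion established above is an equality.

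This proof is essentially a calculation plus bookkeeping, so I do not expect any serious obstacle; the only subtle point is to remember to apply Lemma~\ref{lem:leftrightrk} to convert between left and right ideals when transporting $A(1-\iota(e))$ via $\iota$ in order to compare its dimension with that of $Ae$.
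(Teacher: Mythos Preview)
Your proof is correct and follows essentially the same approach as the paper: first the inclusion $A(1-\iota(e))\subseteq (Ae)^\perp$ from the idempotent relation $(1-\iota(e))\iota(e)=0$, then a dimension count using non-degeneracy, the idempotent decomposition $A=Ae\oplus A(1-e)$, Lemma~\ref{lem:leftrightrk}, and the fact that $\iota$ is a $K$-linear bijection with $\iota(A\iota(1-e))=(1-e)A$. The only difference is the order in which you traverse the chain of equalities, which is immaterial.
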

\begin{proof}
  Since $e$ is idempotent, we have~$A(1-\iota(e))\subset (Ae)^\perp$. On the
  other hand we have
  \begin{eqnarray*}
    \dim (Ae)^\perp 
      &=& \dim A - \dim Ae\\
      &=& \dim A(1-e)\\
      &=& \dim (1-e)A\\
      &=& \dim A(1-\iota(e)),
  \end{eqnarray*}
  where the four equalities follow, respectively, from the assumption
  that $\iota$ is non-degenerate, from the assumption that $e$ is an idempotent,
  from Lemma~\ref{lem:leftrightrk}, and from the assumption that $\iota$ is an
  anti-automorphism. The claimed equality follows.
\end{proof}

We now prove the main result of the section.

\begin{prop}\label{prop:involidem}
  Let~$A$ be a semisimple $\Q$-algebra with an anisotropic involution~$\iota$.
  Then for every idempotent~$e\in A$, we have~$\regmod{A} = Ae+
  A(1-\iota(e))$.
\end{prop}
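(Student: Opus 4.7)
The plan is to recognise that this proposition is essentially a direct combination of Lemma~\ref{lem:iotaperp} with the general fact about anisotropic symmetric bilinear forms recalled just before Lemma~\ref{lem:sym}, namely that if $\bilin$ is anisotropic on a finite-dimensional $K$-vector space $V$, then $V = W \oplus W^\perp$ for every subspace $W \subset V$. All the hard work has already been done.

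First I would note that since $\iota$ is anisotropic, the associated symmetric bilinear form $\bilin_\iota$ on $A$ (symmetric by Lemma~\ref{lem:sym}) is in particular non-degenerate, so the hypothesis of Lemma~\ref{lem:iotaperp} is satisfied. Applying that lemma to the idempotent $e$ yields the identification
\[
(Ae)^\perp = A(1 - \iota(e)),
\]
where the orthogonal complement is taken with respect to $\bilin_\iota$. Next, I would invoke the standard fact that an anisotropic bilinear form on a finite-dimensional vector space admits an orthogonal decomposition along every subspace: applied to $V = A$ (viewed as a finite-dimensional $\Q$-vector space) and $W = Ae$, this gives
\[
\regmod{A} = Ae \oplus (Ae)^\perp.
\]
Combining the two displayed equalities yields $\regmod{A} = Ae \oplus A(1 - \iota(e))$, as required.

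There is no real obstacle here, since both Lemma~\ref{lem:iotaperp} and the orthogonal decomposition property are already available. The only subtlety worth double-checking is that the decomposition as left ideals agrees with the decomposition as $\Q$-subspaces: both $Ae$ and $A(1 - \iota(e))$ are left ideals of $A$, and the direct sum decomposition from $\bilin_\iota$ is a $\Q$-vector space decomposition, but since the summands happen to be left ideals, the equality $A = Ae \oplus A(1-\iota(e))$ is automatically a decomposition of $\regmod{A}$ as a left $A$-module. This is the crux of why anisotropy is the right hypothesis: non-degeneracy alone would give the dimension count used in Lemma~\ref{lem:iotaperp} but would not guarantee that the intersection $Ae \cap A(1-\iota(e))$ is zero.
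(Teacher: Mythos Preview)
Your proof is correct and follows exactly the same route as the paper's own proof: apply Lemma~\ref{lem:iotaperp} to identify $(Ae)^\perp = A(1-\iota(e))$, then use anisotropy of $\bilin_\iota$ to obtain $A = Ae \oplus (Ae)^\perp$. The additional remarks you make (anisotropic $\Rightarrow$ non-degenerate, and why a $\Q$-vector space decomposition into left ideals is automatically an $A$-module decomposition) are sound and make explicit what the paper leaves implicit.
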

\begin{proof}
  By Lemma~\ref{lem:iotaperp} we have~$(Ae)^\perp = A(1-\iota(e))$. Since $\iota$
  is anisotropic, we have~$A = Ae+(Ae)^\perp$, giving the result.
\end{proof}

\begin{example}\label{ex:nonisotropic}
  Proposition~\ref{prop:involidem} is false without the anisotropy assumption,
  even if the algebra is simple. For instance, the split quaternion algebra $A =
  \Mat_2(\Q)$, the involution
      \[
        \begin{pmatrix}a & b \\ c & d\end{pmatrix}\mapsto \begin{pmatrix}d & -b \\
      -c & a\end{pmatrix},
      \]
      and the idempotent $e=\left(\begin{smallmatrix}1&0\\0&0\end{smallmatrix}\right)$
      provide a counter-example. This example shows, in particular, that
      Proposition \ref{prop:involidem} is not a formal consequence of the
      fact that $Ae$ is isomorphic to $A\iota(e)$ as $A$-modules.
\end{example}

\begin{definition}\label{def:invol}
  Let $G$ be a finite group. Recall that the group algebra~$\Q[G]$ is a
  semisimple $\Q$-algebra. Define an involution $x\mapsto x^*$ on $\grpalg$
by setting $g^*= g^{-1}$ for all $g\in G$, and extending $\Q$-linearly.
\end{definition}

\begin{corollary}\label{cor:goodidempotent}
Let $G$ be a finite group. Then for
every idempotent $e\in \grpalg$, we have $\regmodg=\grpalg e+ \grpalg
(1-e^*)$.
\end{corollary}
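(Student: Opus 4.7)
The plan is to deduce the corollary as a direct application of Proposition~\ref{prop:involidem} to the $\Q$-algebra $A=\Q[G]$ equipped with the involution $\iota(x)=x^*$. By Maschke's theorem (as recalled in the definition preceding the corollary), $\Q[G]$ is a semisimple $\Q$-algebra, so the only nontrivial thing to verify is that the involution $x\mapsto x^*$ is anisotropic.

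To check anisotropy, I would compute the associated bilinear form $\bilin_*(x,x)=\tr_{\Q[G]/\Q}(xx^*)$ explicitly in terms of the coefficients of $x$. Writing $x=\sum_{g\in G}a_g g$, one has $x^*=\sum_{g\in G}a_g g^{-1}$, so
\[
xx^* \;=\; \sum_{g,h\in G}a_g a_h\, gh^{-1},
\]
and the coefficient of the identity $1\in G$ in $xx^*$ is $\sum_{g\in G}a_g^2$. By Example~\ref{ex:trace}, this gives
\[
\bilin_*(x,x) \;=\; \#G\cdot\sum_{g\in G}a_g^2.
\]

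Since this is a positive rational multiple of a sum of squares of rational numbers, it is strictly positive for every nonzero $x\in\Q[G]$. In particular, $\bilin_*$ is anisotropic. Applying Proposition~\ref{prop:involidem} now yields, for every idempotent $e\in\Q[G]$, the decomposition $\regmodg = \Q[G]e \oplus \Q[G](1-e^*)$, which is the desired statement. There is no real obstacle here: the only point that requires a moment's thought is the explicit computation of $\bilin_*(x,x)$, and the positivity of $\sum a_g^2$ over $\Q$ is immediate.
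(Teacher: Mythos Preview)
Your proof is correct and follows essentially the same approach as the paper: compute the coefficient of the identity in $xx^*$ to be $\sum_g a_g^2$, invoke Example~\ref{ex:trace} to conclude that the involution is anisotropic, and then apply Proposition~\ref{prop:involidem}. The paper's argument is slightly terser but otherwise identical.
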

\begin{proof}
If $a=\sum_{g\in G}a_gg$ is an arbitrary element of $\grpalg$, then the
coefficient of the identity $1\in G$ in $aa^*$ is $\sum_{g\in G}a_g^2$.
It therefore follows from Example~\ref{ex:trace} that the involution $x\mapsto x^*$
is anisotropic. The result follows from Proposition \ref{prop:involidem},
applied to $A=\grpalg$ and $\iota=(x\mapsto x^*)$.
\end{proof}

\section{Proof of the main theorem}\label{sec:geom}
In this section, we prove Theorem \ref{thm:bodymain}, which is a strengthening
of Theorem \ref{thm:intromain} from the introduction.
Let $G$ be a finite group. Our proof will proceed
by a sequence of Dehn surgeries on a $3$-manifold with a $G$-action.

\begin{defns}
\begin{enumerate}[leftmargin=*]
\item  Let~$\origM$ be a manifold with an action of~$G$. We say that a subset
  $C\subseteq \origM$ is \emph{\good}~if for every~$g\in G\smallsetminus\{1\}$,
  the intersection $C\cap gC$ is empty,
  equivalently if the restriction to~$C$ of the covering map~$\origM\to \origM/G$ is
  injective.
\item
Below, the manifolds $S^1$ and $\partial D^2$ are understood to be
oriented. If $\origM$ is a $3$-manifold with a $G$-action, and
$\emb\colon S^1\times D^2\to \origM$ is an embedding with a \good~image,
let $\drilledM{\origM}{\emb}=\origM\smallsetminus G\act\emb(\text{interior of }S^1\times D^2)$.
Let $\killed$, $\compl$ be simple closed curves on $\emb(S^1\times \partial D^2)$
whose classes in the fundamental group $\pi_1(\emb(S^1\times \partial D^2))$ together
generate that fundamental group. Then the result of \emph{$G$-equivariant surgery on $\origM$
along $\emb$, $\killed$} is the manifold $\filledM{\origM}{\emb,\killed}$ defined as
$\drilledM{\origM}{\emb} \sqcup \bigsqcup_{g\in G}g(S^1\times D^2)$, where
each $g(S^1\times D^2)$ is a copy of $S^1\times D^2$, with the obvious
$G$-action on the disjoint union, modulo the equivalence relation that identifies
the boundary of $\drilledM{\origM}{\emb}$ with that of 
$\bigsqcup_{g\in G}g(S^1\times D^2)$ by identifying, for all $g\in G$,
the curve $g\killed$ with the simple closed curve
$g(\{1\}\times \partial D^2)\subset g(S^1\times \partial D^2)$,
and $g\compl$ with the simple closed curve
$g(S^1\times \{1\})\subset g(S^1\times \partial D^2)$. The diffeomorphism
class of $\filledM{\origM}{\emb,\killed}$ does not depend on the choice of $\compl$.
\end{enumerate}
\end{defns}

If $\origM$ is a $3$-manifold, then we denote the intersection pairing
$H_2(M,\Z)\times H_1(M,\Z)\to \Z$ by $(x,y)\mapsto x\cdot y$, and we use the same
notation for $\Q$-coefficients in place of $\Z$.

We will make repeated use of the following variant of \cite[Lemma 5.6]{KM}.

\begin{lemma}\label{lem:KM}
Let $\origM$ be a $3$-manifold, let
$\emb\colon S^1\times D^2\to \origM$ be an embedding with a \good~image, and
let $\killed$ be a simple closed curve on $\emb(S^1\times \partial D^2)$. Then the row and the
column in the diagram
$$
\xymatrix{
& & H_{2}(\filledM{\origM}{\emb,\killed},\Z)\ar[d]^{\dotmu} & & \\
& & \regmod{\Z[G]}\ar[d]^\epsilon & \\
H_{2}(\origM,\Z)\ar[r]^-{\dotlambda} & \regmod{\Z[G]}\ar[r]^-\delta& H_1(\drilledM{\origM}{\emb},\Z)\ar[r]^-{i_*}\ar[d]^{j_*} & H_1(\origM,\Z)\ar[r] & 0\\
& & H_1(\filledM{\origM}{\emb,\killed},\Z)\ar[d] & & \\
& & 0 & & \\
}
$$
of $\Z[G]$-modules are exact, where the maps are defined as follows:
\begin{itemize}[leftmargin=*]
\item $i_*$ and $j_*$
are induced by the canonical injections of $\drilledM{\origM}{\emb}$ into
$\origM$, respectively $\filledM{\origM}{\emb,\killed}$;
\item $\delta$ sends $1$ to the class of $\emb(\{1\}\times \partial D^2)$, and
$\epsilon$ sends $1$ to the class of $\killed$;
\item $\dotlambda=(x\mapsto \sum_{g\in G}(x\cdot g\lambda)g)$, where $\lambda$ is
the curve $\emb(S^1\times\{0\})\subset M$,
and $\dotmu=(x\mapsto \sum_{g\in G}(x\cdot g\mu)g)$, where $\mu$ is the
curve $S^1\times \{0\}\subset S^1\times D^2\subset \filledM{\origM}{\emb,\killed}$.
\end{itemize}
\end{lemma}
\begin{proof}
The proof is identical to that of \cite[Lemma 5.6]{KM}.
\end{proof}
Below we will also use the notation of Lemma \ref{lem:KM} for homology with
$\Q$-coefficients. The two basic Dehn surgeries that we
will use in the proof of Theorem
\ref{thm:bodymain} are described in Lemma \ref{lem:addQG} and Proposition \ref{prop:removeeQG}.

\begin{lemma}\label{lem:addQG}
Let $M$ be a $3$-manifold with a free $G$-action. Then there exists a
$3$-manifold $M'$ with a free $G$-action such that we have an isomorphism
$H_1(M',\Q)\cong H_1(M,\Q)\oplus \regmodg$ of $\grpalg$-modules.
\end{lemma}
\begin{proof}
Let $\emb\colon S^1\times D^2\to M$ be a \good~embedding that sends
$S^1\times \{1\}$ to a null-homotopic
simple closed curve $\killed$ in $M\smallsetminus G\cdot \emb(\text{interior of }S^1\times D^2)$,
and let $M'=M(\emb,\killed)$.
Then, $\lambda$ is also null-homotopic in $M$, so the map $\dotlambda$
of Lemma \ref{lem:KM} is the zero map, so that the map~$\delta$ of the lemma
is injective; and also, since $\gamma$ is null-homotopic, the map $\epsilon$ of Lemma \ref{lem:KM}
is the zero map, so that the map $j_*$ of the lemma is injective. By
Lemma~\ref{lem:KM}, the manifold~$M'$ has the required property.
\end{proof}

Recall from Definition \ref{def:invol} the involution on $\grpalg$ given
by $g\mapsto g^*=g^{-1}$ for all $g\in G$.

\begin{lemma}\label{lem:genericity}
Let $G$ be a finite group, let $e\in \grpalg$ be an idempotent, and let
$y\in \grpalg$ be arbitrary. Let $A=\Q[G]e$, and
for $s\in \Q$ let $B(s)=\{b\in \grpalg:b(1+sy)\in \Q[G](1-e^*)\}$.
Then for all but finitely many $s\in \Q$, we have $A\cap B(s)=\{0\}$.
\end{lemma}
\begin{proof}
For all but finitely many $s\in \Q$, the element $1+sy$ is invertible,
since the multiplication-by-$y$ map on $\grpalg$ has only finitely many
eigenvalues. This implies that for all but finitely many $s\in \Q$, the $\Q$-vector
subspace $B(s)$ of $\grpalg$
has dimension $\dim\grpalg(1-e^*)=\dim\grpalg(1-e)=\dim\grpalg-\dim A$.
We deduce that for all but finitely many $s\in \Q$, the condition
$A \cap B(s)= \{0\}$ is equivalent to $\dim(A + B(s)) = \dim\grpalg$,
which is equivalent to the non-vanishing of a determinant that is a polynomial
in $s$. Since $B(0)=\grpalg(1-e^*)$, Corollary \ref{cor:goodidempotent}
implies that $A+B(0)=\grpalg$, so the above determinant is not identically $0$,
so has only finitely many roots, as claimed.
\end{proof}

\begin{proposition}\label{prop:removeeQG}
Let $P$ be a submodule of $\regmodg$, let $U$ be a $\grpalg$-module, and
let $M$ be a $3$-manifold with a free $G$-action such that there is an isomorphism
$H_1(M,\Q)\cong \regmodg\oplus U$ of $\grpalg$-modules. Then there exists
a $3$-manifold $M'$ with a free $G$-action such that there is an isomorphism
$H_1(M',\Q)\cong P\oplus U$ of $\grpalg$-modules.
\end{proposition}
\begin{proof}
Let $l\in H_1(M,\Z)$ be such that $\grpalg l$ is the direct summand
$\grpalg l\cong \regmodg$ of $H_1(M,\Q)$,
let $e\in \grpalg$ be an idempotent such that we have an isomorphism $\grpalg e\cong P$
of $\grpalg$-modules, let $d\in \Z_{>0}$ be such that $de\in \intgrpalg$,
let $\lambda$ be a simple
closed curve in $M$ representing the class $[\lambda]=d(1-e)l\in H_1(M,\Z)$, and let
$\emb\colon S^1\times D^2\to M$ be a \good~embedding such that $\emb(S^1\times\{0\})=\lambda$.

First, we claim that, with the above choices, the kernel of the map $\delta$
of Lemma \ref{lem:KM} is $\Q[G](1-e^*)$, so that $H_1(M(\emb),\Q)\cong H_1(M,\Q)\oplus
\Q[G]e^*\cong H_1(M,\Q)\oplus P$, with the summand $P$ being generated by
the class of $\emb(\{1\}\times\partial D^2)$. To prove the claim, let $x\in H_2(M,\Q)$
have intersection number $1$ with $l$, and intersection
number $0$ with $gl$ for all $g\in G\smallsetminus \{1\}$ and with all
classes in $U$. Such an element exists by Poincar\'e duality.
Then it is clear that the image of $f_\lambda$ is generated, as
a $\grpalg$-module, by $f_\lambda(x)$. Write $d(1-e)=\sum_{h\in G}a_hh$, where $a_h\in \Z$
for all $h\in G$.
For every $g\in G$, the intersection number $x\cdot g\lambda$ is the coefficient
in $gd(1-e)$ of the identity, which is $a_{g^{-1}}$, so that
$f_{\lambda}(x) = \sum_{g\in G}(x\cdot g\lambda)g=\sum_{g\in G}a_{g^{-1}}g=d(1-e^*)$,
as claimed.

Let $\meria$ be the class in $H_1(M(\emb),\Q)$ of the simple closed curve
$\emb(\{1\}\times \partial D^2)\subset\emb(S^1\times\partial D^2)$, and let
$\paraa$ be the class of a simple closed curve on $\emb(S^1\times\partial D^2)$
such that $i_*(\paraa) = [\lambda]$.

Our second claim is  that for all
but finitely many values of $q/p$, where~$p$ and $q$ are coprime integers, if
$\killed$ is a simple closed curve on $\emb(S^1\times\partial D^2)$ whose class
$[\killed]$ in $H_1(M(\emb),\Q)$ is $p\meria + q\paraa$, then the map $\epsilon$
of Lemma~\ref{lem:KM} is injective. Indeed, let $p$ and $q$ be coprime integers,
let $\gamma$ be as just described, and suppose that $a\in \regmodg$ is such
that $\epsilon(a)=a[\killed] = 0$. Let $C$ be a $\grpalg$-submodule of $H_1(M(\emb),\Q)$
such that there is a direct sum decomposition of $\grpalg$-modules
\begin{eqnarray}\label{eq:directsum}
H_1(M(\emb),\Q)=\delta(\regmodg)\oplus C=\grpalg \meria \oplus C.
\end{eqnarray}
Then we may write
$\paraa = y\meria+c$, where $y\in \grpalg$ and $c\in C$, so that
$a[\killed] = (ap+aqy)\meria + aqc$. By the direct sum decomposition (\ref{eq:directsum}),
the assumption that~$a[\killed]=0$ is equivalent to
\begin{enumerate}[leftmargin=*,label={\upshape(\roman*)}]
\item\label{item:first} $(ap+aqy)\meria=0$ and
\item\label{item:second} $aqc=0$.
\end{enumerate}

Since $\grpalg\meria$ is
precisely the kernel of $i_*$,
condition \ref{item:second} is equivalent to
$0=i_*(aqc)=i_*(aq\paraa) = aq[\lambda] = aqd(1-e)l\in H_1(M,\Q)$, which, for $q\neq 0$,
is equivalent to $a$ being contained in the left annihilator $A$ of $1-e$. That
annihilator is equal to $\grpalg e$.
Condition \ref{item:first}, on the other hand, is equivalent to
$0=(ap+aqy)\meria=\delta(ap+aqy)$, in other words to $a$ being contained in
$B(\tfrac{q}{p})=\{b\in \grpalg:b(1+\tfrac{q}{p}y)\in \ker\delta\}$.
Since $\ker \delta=\grpalg(1-e^*)$, Lemma \ref{lem:genericity} implies that for
all but finitely many values of $q/p$, we have $A\cap B(\tfrac{q}{p})=\{0\}$,
which proves the second claim.

It immediately follows from the two claims and Lemma \ref{lem:KM} that for
all but finitely many values of $q/p$, we have
$$
H_1(M(\emb,\killed),\Q)\oplus \regmodg\cong H_1(M(\emb),\Q)\cong H_1(M,\Q)\oplus P,
$$
so that $M'=M(\emb,\killed)$ satisfies the conclusion of the proposition.
\end{proof}

\begin{corollary}\label{cor:add}
Let $M_0$ be a $3$-manifold with a free $G$-action, and let $V$ be a $\grpalg$-module.
Then there exists a $3$-manifold $M$ with a free $G$-action such that
$H_1(M,\Q)\cong H_1(M_0,\Q)\oplus V$.
\end{corollary}
\begin{proof}
Let $V\cong \bigoplus_{i=1}^r P_i$, where each $P_i$ is a submodule of
$\regmodg$. Define the $3$-manifolds $M_i$ and $M_i'$
inductively as follows: supposing that $M_{i-1}$ has been defined, by Lemma
\ref{lem:addQG} there exists a $3$-manifold $M_i'$ with a free $G$-action such
that $H_1(M_i',\Q)\cong H_1(M_{i-1},\Q)\oplus \regmodg$,
and by Proposition~\ref{prop:removeeQG} applied to the manifold $M_i'$
and the $\grpalg$-module $P_i$, there exists a $3$-manifold $M_i$
with a free $G$-action such that $H_1(M_i,\Q)\cong H_1(M_{i-1},\Q)\oplus P_i$.
The manifold $M=M_r$ then satisfies the conclusions of the corollary.
\end{proof}

By a \emph{hyperbolic manifold} we mean a connected oriented smooth
manifold whose interior is equipped with a Riemannian metric with constant
curvature $-1$. We can now deduce the main theorem, which is stronger than
Theorem \ref{thm:intromain} and which reads as follows.
\begin{thm}\label{thm:bodymain}
Let $G$ be a finite group, and let $W$ be a finitely generated $\grpalg$-module.
Then there exists a closed~hyperbolic~$3$-manifold $M'$ with a free $G$-action
such that the $\grpalg$-module $H_1(M',\Q)$ is isomorphic to~$W$.
\end{thm}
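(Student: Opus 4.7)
The plan is to reduce the general case to that of a simple module, which is already handled by Theorem~\ref{thm:addsimple}, and then to address hyperbolicity separately. Since $G$ is finite, Maschke's theorem ensures that $\grpalg$ is a semisimple $\Q$-algebra, so any finitely generated $\grpalg$-module $V$ decomposes as a finite direct sum of simple modules $V \cong V_1 \oplus \cdots \oplus V_n$. By the theorem of Cooper and Long \cite{CooperLong}, there exists a closed hyperbolic 3-manifold $M_0$ with a free $G$-action satisfying $H_1(M_0,\Q) = 0$. Starting from $M_0$ and applying Theorem~\ref{thm:addsimple} inductively to adjoin each $V_i$ in turn, one obtains a sequence of closed 3-manifolds $M_0, M_1, \ldots, M_n$, each with a free $G$-action, and with $\grpalg$-module isomorphisms $H_1(M_i,\Q) \cong V_1 \oplus \cdots \oplus V_i$. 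In particular the final manifold $M := M_n$ realises $H_1(M,\Q) \cong V$.

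To guarantee hyperbolicity of $M$, the plan is to invoke Thurston's hyperbolic Dehn surgery theorem in an equivariant form. The construction underlying Theorem~\ref{thm:addsimple} is a sequence of $G$-equivariant Dehn drillings and fillings performed on the hyperbolic manifold $M_0$; since the $G$-action is free, these surgeries descend to ordinary surgeries on the hyperbolic quotient $M_0/G$, and then lift back $G$-equivariantly. After the drilling step one has a $G$-equivariantly cusped 3-manifold. By Thurston's theorem, on the quotient all but finitely many Dehn fillings of each cusp produce a hyperbolic manifold, and a hyperbolic filling of the quotient lifts to a hyperbolic $G$-equivariant filling upstairs.

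The crux of making this work is that the algebraic conclusions of Lemmas~\ref{lem:drilling} and~\ref{lem:filling} must remain valid for suitable fillings that also fall in the hyperbolic range. The filling slope used in Lemma~\ref{lem:filling} is $(1,1)$, but the key structural fact used there is only that $\merib + \parab$ together with $\grpalg(1-e^*)\merib + \grpalg e \parab$ generates $\grpalg\merib + \grpalg \parab$; the argument should go through unchanged for any coprime integer slope $(p,q)$ whose class modulo the submodule $\ker i_* = \grpalg(1-e^*)\merib + \grpalg e \parab$ equals that of $\merib + \parab$. Since such slopes form an infinite family, and only finitely many slopes are excluded by Thurston's theorem, one can always choose a slope that is both admissible algebraically and produces a hyperbolic filling.

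The main obstacle I anticipate is precisely this coordination: verifying that the flexibility of the filling slope is compatible with the module-theoretic bookkeeping of Corollary~\ref{cor:goodidempotent} and Lemma~\ref{lem:filling}, and that the resulting quotient manifold is in fact cusped hyperbolic at the stage when Thurston's theorem is invoked. Once the surgeries are arranged so that each intermediate quotient remains hyperbolic (cusped when appropriate) and the final filling slope is chosen generically within the allowed rational class, the hyperbolic form of Theorem~\ref{thm:intromain} follows formally from the $\grpalg$-module version already established by iterating Theorem~\ref{thm:addsimple}.
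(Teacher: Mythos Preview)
Your reduction to the simple case and the iteration of Theorem~\ref{thm:addsimple} is exactly what the paper does. The difference, and the gap in your argument, lies in the hyperbolicity step.

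Your plan is to apply Thurston's hyperbolic Dehn surgery theorem to the drilled quotient $N_2/G$, choosing among the infinitely many admissible filling slopes one that lands in the hyperbolic range. But Thurston's theorem requires the cusped manifold to be hyperbolic, and $N_2/G$ is not: the first step of the construction (Lemma~\ref{lem:regrep}) is a $(0,1)$-surgery, i.e.\ a connected sum of $N_0/G$ with $S^1\times S^2$. This produces an essential sphere in $N_1/G$, and drilling out the solid torus $\torb$ does not remove it, so $N_2/G$ is reducible and in particular not hyperbolic. The same obstruction persists if you try to view the whole iteration as a single large drilling-then-filling on $M_0/G$: the tori drilled in step~(a) are unknots in balls, so their complements contain Seifert-fibred pieces. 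There is also a smaller slip: the condition that $p\merib+q\parab$ be congruent to $\merib+\parab$ modulo $\ker i_*$ forces $p=q=1$, since a nonzero scalar cannot lie in the proper ideal $\grpalg(1-e^*)$ or $\grpalg e$; what is actually true (and sufficient for your purposes, were the cusped manifold hyperbolic) is that the proof of Lemma~\ref{lem:filling} goes through for any $(p,q)$ with $pq\neq 0$.

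The paper avoids this problem by decoupling the two tasks. It first runs the iteration to produce a closed $3$-manifold $M_s$ with a free $G$-action and $H_1(M_s,\Q)\cong V$, making no attempt to keep the intermediate manifolds hyperbolic. Only at the end does it hyperbolise: by \cite[Proposition~4.2]{hyperbolise} the quotient $M_s/G$ contains a knot $k$ bounding a disc whose complement is complete hyperbolic with one cusp, and one can arrange that the preimage of $k$ in $M_s$ is $\#G$ unknots bounding disjoint discs. Thurston's theorem then applies to $(M_s/G)\setminus k$, and since $k$ is homologically trivial, all but one surgery slope leave $H_1$ (and its $\grpalg$-structure) unchanged. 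This is both simpler and more robust than trying to thread hyperbolicity through each surgery step.
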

\begin{proof}
Let $M_0$ be a $3$-manifold with a free $G$-action. There are many constructions
of such manifolds, see e.g. \cite[\S 1]{CooperLong}. We can apply Corollary
\ref{cor:add} to obtain a $3$-manifold $M_1$ with a free $G$-action such that for
some integer $n\geq 1$, there exists an isomorphism $H_1(M_1,\Q)\cong \regmodg^{\oplus n}\oplus W$
of $\grpalg$-modules. By repeated application of Proposition \ref{prop:removeeQG}
with $P=\{0\}$, we may obtain a $3$-manifold $M_2$ with a free $G$-action
such that there is an isomorphism of $\grpalg$-modules $H_1(M_2,\Q)\cong W$.

We now follow the argument of \cite[Theorem 2.6]{CooperLong} to
obtain a hyperbolic such manifold. Let $p\colon M_2\rar M_2/G$ be the
covering map.
By \cite[Proposition~4.2]{hyperbolise}, the manifold $M_2/G$ contains a
null-homotopic simple closed curve~$k$ such that $(M_2/G)\smallsetminus k$ is a
complete hyperbolic manifold with a single cusp and such that $p^{-1}(k)$ is a
union of $\#G$ simple closed curves that bound disjoint discs in $M_2$
(see also the first paragraph of \cite[Proof of Lemma 4.3]{hyperbolise}).
Let $\emb\colon S^1\times D^2\to M_2$
be an embedding with a \good~image such that $\emb(S^1\times \{0\})$ is one of
these simple closed curves. By \cite[Lemma 4.3]{hyperbolise}, for all but one slope
$\killed$ on $\emb(S^1\times \partial D^2)$, the $G$-equivariant surgery along
$\emb$, $\killed$ on $M_2$ yields a closed manifold $M_2(\emb,\killed)$ with a
free $G$-action, satisfying $H_1(M_2(\emb,\killed),\Q)\cong W$.
By Thurston's hyperbolic Dehn surgery theorem \cite[Theorem 5.8.2]{Thurston},
equivariant surgery for all but finitely many of these slopes results in
a hyperbolic manifold $M'$.
\end{proof}
\begin{remark}
  The last paragraph of the above proof can be replaced by an appeal to
  Theorem A in the very recent preprint \cite{equivHyper}.
\end{remark}

\section{Homology and the structure of fixed point sets}\label{sec:otherdim}
In this section, we first briefly discuss the analogues of the results in Section
\ref{sec:geom} for $G$-actions that are not necessarily free. We will omit
most details, since the proofs are essentially identical to those of Section
\ref{sec:geom}. We then compare these results to the very different situation
of group actions on $2$-dimensional manifolds.

The proof of Lemma \ref{lem:addQG} goes through in the following
greater generality: we may allow $M$ to have a $G$-stable ``bad region''
$M^{\bad}\subseteq M$ that is allowed to be an orbifold, and in which
non-trivial elements of $G$ are allowed to have fixed points. This set will then
be avoided during the sequence of surgeries. Moreover, the proof of Proposition
\ref{prop:removeeQG} also goes through in that generality, as long as the summand
$\regmodg$ of $H_1(M,\Q)$ is contained in the image of the natural map
$H_1(M\smallsetminus M^{\bad},\Q)\to H_1(M,\Q)$. One therefore deduces
the following generalisations of Corollary \ref{cor:add} and of Theorem
\ref{thm:bodymain}.
In the next two results, let $\cC$ be the category of connected topological
$3$-dimensional orbifolds, possibly with boundary, and let $\cC'$ be the full
subcategory of $\cC$ whose objects are oriented manifolds without boundary.
All group actions will be assumed to be by homeomorphisms.

\begin{theorem}\label{thm:generaladd}
Let $M_0\in \cC$, with an action of a finite group $G$, and let
$V$ be a finitely generated $\grpalg$-module. Let $M_0^{\bad}\subseteq M_0$
be a subset that satisfies the following conditions:
\begin{enumerate}[leftmargin=*, label={(\alph*)}]
\item $M_0^{\bad}$ is $G$-stable;
\item the complement $M_0\smallsetminus M_0^{\bad}$ is in $\cC'$;
\item the group $G$ acts freely on $M_0\smallsetminus M_0^{\bad}$ by orientation
preserving automorphisms.
\end{enumerate}
Then there exists $M\in \cC$
with a $G$-action, and a $G$-stable subset $M^{\bad}\subseteq M$ such that
\begin{enumerate}[leftmargin=*]
\item the complement $M\smallsetminus M^{\bad}$ is in $\cC'$, and
$G$ acts freely by orientation preserving automorphisms on it,
\item there is a $G$-equivariant homeomorphism from
$M_0^{\bad}$ to $M^{\bad}$,
\item there is an isomorphism of $\grpalg$-modules
$H_1(M,\Q)\cong H_1(M_0,\Q)\oplus V$.
\end{enumerate}
\end{theorem}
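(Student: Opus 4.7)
The plan is to run the proof of Theorem~\ref{thm:addsimple} verbatim, while ensuring that every surgery is carried out within the complement $N_0\setminus N_0^{\bad}$, so that the bad region is left pointwise untouched by the construction. We then take $N^{\bad}\subseteq N$ to be the image of $N_0^{\bad}$ under this identification, with the required canonical $G$-equivariant homeomorphism $N_0^{\bad}\to N^{\bad}$ induced by the identity.

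First I would verify that Lemmas~\ref{lem:regrep}--\ref{lem:filling} apply inside $N_0\setminus N_0^{\bad}$. By hypotheses~(b) and~(c), this complement is a $3$-dimensional object of $\cC'$ with a free action of $G$ by orientation-preserving automorphisms, which is exactly the setting in which Lemma~\ref{lem:good} is stated. Hence one can find a \good~ball $B$ inside $N_0\setminus N_0^{\bad}$, and perform the initial $(0,1)$-Dehn surgery of Lemma~\ref{lem:regrep} inside $G\cdot B$. All subsequent constructions---the handlebody $\handlebody$ together with the basepoint $x$ and the paths $\ourpath_g$ of Lemma~\ref{lem:handlebody}, the solid torus $\torb$ and its $G$-orbit drilled out in Lemma~\ref{lem:drilling}, and the $(1,1)$-Dehn filling of Lemma~\ref{lem:filling}---can be arranged to lie inside a compact neighbourhood of $G\cdot B$, since the geometric objects involved are of bounded topological complexity and $N_0\setminus N_0^{\bad}$ is a non-empty open $3$-manifold.

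The homology calculation then goes through essentially unchanged. All surgeries take place in a compact region $K\subseteq N_0\setminus N_0^{\bad}$, so $N\setminus K$ is canonically $G$-equivariantly homeomorphic to $N_0\setminus K$, and in particular $N^{\bad}=N_0^{\bad}$ sits inside $N$ exactly as it did in $N_0$. The Mayer--Vietoris arguments underlying the isomorphisms~(\ref{eq:N2N1})--(\ref{eq:kerk}) depend only on the local topology of $K$ and its boundary together with the $\grpalg$-module structure of $H_1$ of the complement; the presence of $N_0^{\bad}$ is immaterial to these calculations. The main obstacle is simply to be vigilant that each step of the proof of Theorem~\ref{thm:addsimple} uses only the free orientation-preserving $G$-action on the ambient manifold in which the surgery is performed, and never uses freeness of the $G$-action on all of $N_0$; once this is checked, the relative version follows with no substantive new input, yielding $H_1(N,\Q)\cong H_1(N_0,\Q)\oplus V$.
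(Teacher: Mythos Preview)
Your proposal is correct and follows exactly the approach the paper takes: the paper does not give a separate proof of this theorem but simply remarks that the construction of Theorem~\ref{thm:addsimple} goes through verbatim once one observes that the bad region can be avoided during the surgeries. Your write-up spells out this observation in slightly more detail (checking that Lemma~\ref{lem:good} and the subsequent lemmas only require a free orientation-preserving action on the region where the surgery takes place), but the argument is the same.
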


\begin{theorem}\label{thm:general}
Let $M_0\in \cC$ be such that $H_1(M_0,\Q)$ is finite dimensional over $\Q$,
with an action of a finite group $G$,
and let $W$ be a finitely generated $\grpalg$-module. Let
$M_0^{\bad}\subseteq M_0$ be a subset that satisfies the following conditions:
\begin{enumerate}[leftmargin=*, label={(\alph*)}]
\item $M_0^{\bad}$ is $G$-stable;
\item the complement $M_0\smallsetminus M_0^{\bad}$ is in $\cC'$;
\item the group $G$ acts on $M_0\smallsetminus M_0^{\bad}$ freely by orientation
preserving automorphisms;
\item\label{item:surj} the canonical map
$H_1(M_0\smallsetminus M_0^{\bad},\Q)\rar H_1(M_0,\Q)$ is surjective.
\end{enumerate}
Then there exists $M\in \cC$ with a $G$-action, and a subset
$M^{\bad}\subseteq M$ such that
\begin{enumerate}[leftmargin=*]
\item the complement $M\smallsetminus M^{\bad}$ is in $\cC'$, and
$G$ acts freely by orientation preserving automorphisms on it,
\item there is a $G$-equivariant
homeomorphism from $M_0^{\bad}$ to $M^{\bad}$,
\item there is an isomorphism of $\grpalg$-modules $H_1(M,\Q)\cong W$.
\end{enumerate}
\end{theorem}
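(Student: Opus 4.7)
The plan is to combine a generalised form of the Cooper--Long construction with iterated application of Theorem \ref{thm:generaladd}. Structurally, I would first reduce to the case $H_1(N_0, \Q) = 0$, and then build up the desired module $V$ one simple summand at a time.

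First, using hypothesis \ref{item:surj}, I would produce from $N_0$ a new object $N_0' \in \cC$ with a $G$-action and a $G$-stable bad set $(N_0')^{\bad}$ canonically $G$-equivariantly identified with $N_0^{\bad}$, such that $H_1(N_0', \Q) = 0$. This is the slight strengthening of Cooper--Long alluded to in the paragraph preceding the theorem: their procedure kills a chosen rational homology class by representing it as a closed path, drilling a $G$-equivariant regular neighbourhood of its $G$-orbit, and performing suitable $G$-equivariant Dehn filling. For the bad set to be preserved, the representing loop must avoid $N_0^{\bad}$; hypothesis \ref{item:surj} guarantees that such a representative exists in $N_0 \setminus N_0^{\bad}$, and Lemma \ref{lem:good}, applied in the free orientation-preserving complement, lets us arrange the loop to be \good. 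Iterating finitely many times over a generating set of $H_1(N_0, \Q)$ yields $N_0'$.

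Next, decompose $V = \bigoplus_{i=1}^s V_i$ into simple $\grpalg$-modules. Define a sequence $\{N_j\}_{j=0}^s$ inductively by setting $N_0 := N_0'$ and, given $N_{j-1}$, applying Theorem \ref{thm:generaladd} with inputs $N_{j-1}$ and the simple module $V_j$ to obtain $N_j \in \cC$ equipped with a bad set canonically identified with $N_0^{\bad}$, satisfying the isomorphism of $\grpalg$-modules $H_1(N_j, \Q) \cong H_1(N_{j-1}, \Q) \oplus V_j$. After $s$ steps, $N := N_s$ has $H_1(N, \Q) \cong V$ and $N^{\bad}$ canonically identified with $N_0^{\bad}$, as required.

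The main obstacle is the first step: verifying that the Cooper--Long procedure is compatible with the presence of a bad set. The key point is that each surgery is localised in an equivariant regular neighbourhood of a chosen loop, which by \ref{item:surj} can be placed in $N_0 \setminus N_0^{\bad}$; the \good-ness arguments from Section \ref{sec:geom} require only a locally $G$-free neighbourhood of the surgery locus, not that the $G$-action be free on all of $N_0$. Once this generalised Cooper--Long step is in hand, the iteration in the second stage is a direct application of Theorem \ref{thm:generaladd} and presents no extra difficulty.
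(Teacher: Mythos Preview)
Your proposal is correct and matches the paper's own argument essentially verbatim. The paper proves Theorem~\ref{thm:general} by precisely the two-step outline you give: first invoke the Cooper--Long construction in the generalised form that only requires each rational homology class to be representable by a loop avoiding $N_0^{\bad}$ (which is exactly what hypothesis~\ref{item:surj} provides), and then iterate Theorem~\ref{thm:generaladd} to add the simple summands of $V$ one at a time.
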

\begin{remark}
Condition \ref{item:surj} is automatically satisfied if
$M_0^{\bad}$ is a finite union of at most $1$-dimensional submanifolds,
possibly with boundary. In particular, such an $M_0^{\bad}\subset M_0$ exists
if $M_0$ is an
oriented Riemannian orbifold, and $G$ acts effectively by orientation
preserving isometries.
\end{remark}

Theorem \ref{thm:general} essentially says that one cannot read off the
geometry of the fixed point set in an orientation preserving $G$-action on
a $3$-manifold $M$ from the $\grpalg$-module structure of $H_1(M,\Q)$. We 
now briefly contrast this with the situation in dimension $2$.
We do not claim any originality in what follows, but we have not been able
to find Corollary \ref{cor:dim2}, in particular, stated in the literature.

The discussion will be most conveniently formulated in terms of characters, for
which a general reference is \cite{Isaacs}.
If $G$ is a finite group, and $U$ is a subgroup, we will denote
by $\permchar{U}$ the permutation character corresponding to the $G$-set $G/U$.

\begin{thm}[Artin's Induction Theorem]\label{thm:Artin}
  Let~$G$ be a finite group. The $\Q$-vector space generated by the $\Q$-valued
  characters of $G$ is freely spanned by the permutation characters
  $\permchar{C}$, as $C$ runs over $G$-conjugacy class representatives of
  cyclic subgroups of $G$.
\end{thm}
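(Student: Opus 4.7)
The plan is to prove the theorem by identifying the $\Q$-vector space $V_{\Q}$ generated by the $\Q$-valued characters of $G$ with the space of all $\Q$-valued class functions on $G$, computing its dimension in terms of cyclic subgroups of $G$, and then verifying that the permutation characters $\pi(C)$ are linearly independent via a triangularity argument.

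First, I would check that $V_{\Q}$ coincides with the full space of $\Q$-valued class functions on $G$. Every $\Q$-valued character lies in this space; the converse follows from the fact that for each $\mathrm{Gal}(\bar\Q/\Q)$-orbit $\cO$ of irreducible $\C$-characters, the sum $\sum_{\chi\in\cO}\chi$ is a $\Q$-valued character, and these Galois sums form a $\Q$-basis of the $\Q$-valued class functions (being exactly the Galois-invariants of the full $\C$-basis of class functions given by the irreducibles). In particular $\dim_{\Q} V_{\Q}$ equals the number of Galois orbits of irreducible characters, which by a standard argument equals the number of \emph{$\Q$-conjugacy classes} in $G$, where two elements are $\Q$-conjugate if and only if their cyclic subgroups are $G$-conjugate. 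The map $x\mapsto\langle x\rangle$ then gives a bijection between $\Q$-conjugacy classes and $G$-conjugacy classes of cyclic subgroups, so $\dim_{\Q} V_{\Q} = n$, the number of such subgroup classes.

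Each permutation character $\pi(C)$ is $\Q$-valued (indeed $\Z$-valued) and lies in $V_{\Q}$, and there are exactly $n$ of them, so it suffices to show linear independence. I would fix representatives $C_{1},\ldots,C_{n}$ of the $G$-conjugacy classes of cyclic subgroups ordered so that $|C_{1}|\le\cdots\le|C_{n}|$, pick a generator $g_{i}$ of each $C_{i}$, and form the $n\times n$ matrix $M$ with entries $M_{ij}=\pi(C_{i})(g_{j})$. The orbit-counting formula $\pi(C)(g)=|C|^{-1}\cdot\#\{x\in G:x^{-1}gx\in C\}$ shows that $M_{ij}\ne 0$ forces some $G$-conjugate of $\langle g_{j}\rangle=C_{j}$ to lie inside $C_{i}$, so in particular $|C_{j}|$ divides $|C_{i}|$ and hence $j\le i$. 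Thus $M$ is lower triangular, and the diagonal entry is $M_{ii}=[N_{G}(C_{i}):C_{i}]$, because $x^{-1}g_{i}x\in C_{i}$ happens precisely when $x\in N_{G}(C_{i})$ (using that $g_{i}$ generates $C_{i}$ and conjugation preserves orders). Since $[N_{G}(C_{i}):C_{i}]\ge 1$, we get $\det M\ne 0$, proving linear independence of the $\pi(C_{i})$ in the class-function space, and together with the dimension count this yields the theorem.

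I expect the main obstacle to be the initial identification of $V_{\Q}$ with the full space of $\Q$-valued class functions, since it requires a small Galois-theoretic unwinding to produce the $\Q$-basis of Galois sums and to match the resulting dimension with conjugacy classes of cyclic subgroups. Once that is in place, the triangularity argument is purely mechanical; indeed the reader could shortcut the whole dimension-counting step by directly invoking Brauer's theorem that the number of $\Q$-valued irreducible characters of $G$ equals the number of $\Q$-conjugacy classes in $G$.
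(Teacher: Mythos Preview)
The paper itself gives no proof of this statement, only a citation to Isaacs, so there is nothing in the paper to compare your argument against; I evaluate your proposal on its own merits.

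Your overall strategy---compute $\dim_{\Q} V_{\Q}$ and then exhibit that many linearly independent permutation characters---is sound, and the triangularity argument for independence is essentially correct. However, your opening identification of $V_{\Q}$ with the full space of $\Q$-valued class functions on $G$ is \emph{false}. For $G=C_3$ the $\Q$-valued class functions form a $3$-dimensional space (one coordinate per conjugacy class), but every element of $V_{\Q}$ takes equal values at the two non-identity elements, so $V_{\Q}$ is only $2$-dimensional; the indicator function of a single non-identity element is a $\Q$-valued class function not lying in $V_{\Q}$. What is true is that the Galois orbit sums $\sum_{\chi\in\cO}\chi$ form a $\Q$-basis of $V_{\Q}$ itself: if $\psi$ is a $\Q$-valued character then each multiplicity $\langle\psi,\chi\rangle$ is a non-negative integer, and Galois-invariance of $\psi$ forces these integers to be constant along orbits. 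This already gives $\dim_{\Q} V_{\Q}$ equal to the number of Galois orbits, and from there your chain of identifications with $\Q$-classes and then with conjugacy classes of cyclic subgroups is correct. So the error is in the description of the ambient space, not in the dimension count you ultimately use.

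There is also a minor gap in the triangularity step: from $|C_j|\mid|C_i|$ alone you cannot deduce $j\le i$ when the size ordering has ties. You need to add that if $M_{ij}\ne 0$ with $|C_j|=|C_i|$, then a conjugate of $C_j$ lies in $C_i$ and hence equals it, forcing $i=j$ since the $C_k$ were chosen pairwise non-conjugate. With these two corrections your proof goes through.
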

\begin{proof}
See \cite[Theorem 5.21]{Isaacs},
\end{proof}
The following result can be deduced from the Riemann-Hurwitz formula,
and either the Lefschetz trace formula or Artin's Induction Theorem.
\begin{proposition}\label{prop:2dim}
Let $M$ be a \decent~surface, let $G$ be a group of orientation
preserving automorphisms of $M$, and let $\tau$ denote the genus of~$M/G$.
Let $\cS$ be a full set of $G$-orbit representatives of the ramification points
of the covering $M\rar M/G$, and for each $P\in \cS$, let $S_P$ be the stabiliser
of~$P$ in $G$. Let $\chi$ be the character
corresponding to the $G$-module $H_1(M,\Q)$. Then we have
$$
\chi = 2\permchar{G} + (2\tau - 2 + \#\cS)\permchar{\{1\}} - \sum_{P\in \cS} \permchar{S_P}.
$$
\end{proposition}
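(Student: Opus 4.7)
The plan is to verify the claimed identity of characters pointwise on $G$, using the Lefschetz fixed-point theorem for elements $g\ne 1$ and the Riemann--Hurwitz formula for the identity. Both of these are classical tools that directly interact with the geometric data on the right-hand side, and no further algebraic machinery is needed.

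For $g\in G$ with $g\ne 1$: since $G$ acts by orientation-preserving diffeomorphisms, both $H_0(M,\Q)$ and $H_2(M,\Q)$ are one-dimensional with trivial $G$-action. The Lefschetz formula then gives $\#\Fix(g)=1-\chi(g)+1=2-\chi(g)$. Evaluating the right-hand side of the claimed formula at $g\ne 1$ (noting that $\permchar{\{1\}}$ vanishes away from the identity), I am reduced to the combinatorial identity
$$\#\Fix(g)=\sum_{P\in\cS}\permchar{S_P}(g).$$
Every fixed point of $g$ has non-trivial stabiliser in $G$, hence is a ramification point of $M\to M/G$, and therefore lies in a unique orbit $G\act P$ with $P\in\cS$. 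Writing such a fixed point as $xP$, its stabiliser is $xS_Px^{-1}$, and the fixed-point condition translates to $x^{-1}gx\in S_P$; summing the number of such cosets $xS_P\in G/S_P$ as $P$ runs over $\cS$ recovers exactly $\sum_P\permchar{S_P}(g)$.

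For $g=1$: I evaluate the right-hand side at the identity to
$$2+(2\tau-2+\#\cS)\#G-\sum_{P\in\cS}[G:S_P],$$
and compare with $\chi(1)=2g_M$. The Riemann--Hurwitz formula applied to the branched cover $M\to M/G$ of degree $\#G$ with base genus $\tau$, in which each $P\in\cS$ contributes an orbit of $[G:S_P]$ ramification points of index $e_P=\#S_P$, yields
$$2g_M-2=\#G(2\tau-2)+\sum_{P\in\cS}[G:S_P](e_P-1)=\#G(2\tau-2+\#\cS)-\sum_{P\in\cS}[G:S_P],$$
where the second equality uses the identity $e_P[G:S_P]=\#G$. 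Rearranging gives precisely the value above.

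The only real obstacle is careful bookkeeping, in particular verifying that the action on $H_2(M,\Q)$ is trivial (this is where orientation-preservation enters) and correctly translating a branch point of $M/G$ with stabiliser data $S_P$ into the contribution $[G:S_P](e_P-1)$ in Riemann--Hurwitz. As an alternative, one can bypass the Lefschetz step by invoking Artin's induction theorem (Theorem~\ref{thm:Artin}): it suffices to compute $\langle\chi,\permchar{C}\rangle$ for each cyclic $C\le G$, and this equals $\dim H_1(M,\Q)^C=\dim H_1(M/C,\Q)=2g_{M/C}$, which Riemann--Hurwitz applied to $M\to M/C$ determines explicitly. Matching the resulting expressions against the pairings of the right-hand side with $\permchar{C}$ again reduces to the same orbit-counting identity as above.
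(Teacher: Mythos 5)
Your proof is correct, and it supplies the details of exactly the route the paper alludes to. The paper itself gives no argument for Proposition~\ref{prop:2dim}: it remarks that the result ``can be deduced from the Riemann--Hurwitz formula, and either the Lefschetz trace formula or Artin's Induction Theorem,'' and then cites Broughton. Your argument carries this out: trivial $G$-action on $H_0$ and $H_2$ (the latter using orientation-preservation), Lefschetz for $g\neq 1$ reducing to the orbit-count $\#\Fix(g)=\sum_{P\in\cS}\permchar{S_P}(g)$, and Riemann--Hurwitz for $g=1$. The one step you pass over without comment is that each fixed point of a nontrivial finite-order orientation-preserving diffeomorphism of a surface is isolated and has topological Lefschetz index $+1$ (locally $g$ is a rotation, so $\det(I-d_xg)=2-2\cos\theta>0$); this is what lets you replace the Lefschetz number by $\#\Fix(g)$. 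Your Artin-induction variant at the end is equally valid and is the other route the paper mentions.
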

\begin{proof}
See \cite[Proposition 2]{Broughton}.
\end{proof}
It follows that, in the situation of Proposition \ref{prop:2dim}, the structure
of the ramification set of the covering $M\rar M/G$ can be read off from
the $\grpalg$-module structure of $H_1(M,\Q)$ in the following precise sense.
\begin{corollary}\label{cor:dim2}
Let $M$ and $M'$ be \decent~surfaces with an action of a finite group
$G$ by orientation preserving automorphisms. If~$P$ is a point on $M$ or $M'$,
let $S_P$ be its stabiliser in $G$. Suppose that the
$\grpalg$-modules $H_1(M,\Q)$ and $H_1(M',\Q)$ are isomorphic.
Then there exists a bijection $\beta$ between the ramification points of the cover
$M\rar M/G$, and those of the cover $M'\rar M'/G$ such that for all ramification
points $P\in M$, we have $S_P=S_{\beta(P)}\leq G$, so that, in particular,
$\beta$ preserves ramification indices.
\end{corollary}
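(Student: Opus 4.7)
The plan is to translate the hypothesis $H_1(M,\Q)\cong H_1(M',\Q)$ into an equality of characters and then extract the combinatorial information using Proposition~\ref{prop:2dim} and Artin's Induction Theorem. Let $\chi$ and $\chi'$ denote the characters of $H_1(M,\Q)$ and $H_1(M',\Q)$, and decorate the quantities in Proposition~\ref{prop:2dim} with a prime to denote those coming from $M'$. Since $\chi = \chi'$, subtracting the two instances of the formula cancels the common $2\permchar{G}$ term and yields the identity
$$
0 = \bigl(2(\tau-\tau') + \#\cS - \#\cS'\bigr)\permchar{\{1\}} - \sum_{P\in \cS}\permchar{S_P} + \sum_{P'\in \cS'}\permchar{S_{P'}}
$$
in the space of $\Q$-valued class functions on $G$.

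The key observation is then the classical fact that the stabilizer of a point under an orientation preserving action of a finite group on a surface acts faithfully on the tangent plane as a finite subgroup of $\SO(2)$, hence is cyclic; in particular, every $S_P$ with $P\in \cS$ (and every $S_{P'}$ with $P'\in \cS'$) is a non-trivial cyclic subgroup of $G$. By Artin's Induction Theorem (Theorem~\ref{thm:Artin}), the characters $\permchar{C}$, as $C$ ranges over $G$-conjugacy class representatives of cyclic subgroups, are linearly independent. Reading off the coefficient of $\permchar{\{1\}}$ in the displayed identity forces $2\tau + \#\cS = 2\tau' + \#\cS'$, and for every non-trivial $G$-conjugacy class $[C]$ of cyclic subgroups, reading off the coefficient of $\permchar{C}$ yields
$$
\#\{P\in \cS \colon S_P\text{ is }G\text{-conjugate to }C\} = \#\{P'\in \cS'\colon S_{P'}\text{ is }G\text{-conjugate to }C\}.
$$

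This already produces a bijection between $G$-orbits of ramification points in $M$ and in $M'$ that preserves the $G$-conjugacy class of the stabilizer. To upgrade it to the required point-level bijection $\beta$ with $S_P = S_{\beta(P)}$, I would, for each matched pair of orbits, choose representatives $P$ and $P'$ whose stabilizers are conjugate in $G$; replacing $P'$ by a suitable $G$-translate one may assume $S_P = S_{P'}$ on the nose, and one then defines $\beta(gP)=gP'$ for all $g\in G$. The identity $S_{gP} = gS_Pg^{-1} = gS_{P'}g^{-1}= S_{gP'}$ shows that stabilizers agree along the orbit, and assembling these per-orbit bijections yields the desired~$\beta$. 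No single step is deep; the crux of the argument is the recognition that the stabilizers on the right-hand side of the displayed identity are all cyclic, which is exactly what makes Artin's theorem directly applicable and the coefficient comparison meaningful.
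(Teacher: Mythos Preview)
Your argument is correct and follows essentially the same route as the paper: apply Proposition~\ref{prop:2dim} to both surfaces, equate characters, use that the stabilisers are non-trivial cyclic so that Artin's Induction Theorem (Theorem~\ref{thm:Artin}) lets one match up conjugacy classes of stabilisers on the orbit representatives, and then spread this out to a point-level bijection using the fact that stabilisers along a $G$-orbit form a full conjugacy class. Your write-up is in fact slightly more explicit than the paper's in justifying cyclicity of the stabilisers and in spelling out the per-orbit construction of~$\beta$.
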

\begin{proof}
Let $\cS$ and $\cS'$ be full sets of $G$-orbit representatives of the
ramification points of $M\rar M/G$, respectively of $M'\rar M'/G$, and let
$\tau$ and $\tau'$ be the genera of $M/G$, respectively of $M'/G$. By Proposition
\ref{prop:2dim}, there is an equality of characters
$$
(2\tau - 2 + \#\cS)\permchar{\{1\}} - \sum_{P\in \cS} \permchar{S_P}=
(2\tau' - 2 + \#\cS')\permchar{\{1\}} - \sum_{P'\in \cS'} \permchar{S_{P'}}.
$$
Since none of the stabilisers $S_P$ and $S_{P'}$ are trivial, and since they
are all cyclic, it follows from Artin's Induction Theorem that there exists
a bijection~$\alpha$ from $\cS$ to $\cS'$ such that for all $P\in \cS$, we have
$\permchar{S_P}=\permchar{S_{\alpha(P)}}$. This condition on the permutation
characters is equivalent to $S_P$ being conjugate to $S_{\alpha(P)}$ in $G$.
Since for every $P\in \cS$, the set of stabilisers of the points in the
$G$-orbit of $P$ is a single conjugacy class of subgroups, the result follows.
\end{proof}

\section{Application to isospectral manifolds}\label{sec:isospectral}
In this section we deduce Theorem \ref{thm:introsiso} from Theorem
\ref{thm:intromain}. Our proof relies on Sunada's group theoretic construction
of isospectral manifolds \cite{Sunada}, and on the formalism of regulator
constants, as introduced by Dokchitser--Dokchitser, see e.g. \cite{tamroot}.

\subsection{Sunada's construction and the Cheeger--M\"uller theorem}
If $p$ is a prime number, we will write $\Z_{(p)}$ for the localisation
of~$\Z$ at $p$, i.e. the subring $\{\frac ab\colon p\nmid b\}$ of $\Q$.
In this subsection, $R$ will be either $\Q$ or $\Z_{(p)}$, where~$p$
is a prime number.
\begin{definition}
Let $G$ be a finite group. An
\emph{$R[G]$-relation} is a formal linear combination
$\sum_i U_i - \sum_j U_j'$ of subgroups of $G$ with the property that there
is an isomorphism of $R[G]$-modules
$$
\bigoplus_i R[G/U_i] \cong \bigoplus_j R[G/U_j'].
$$
\end{definition}

The following lemmas are routine, and we leave the proofs to the reader.
\begin{lemma}
If $G$ is a finite group, $N$ is a normal subgroup, and $\Theta=\sum_i U_i - \sum_j U_j'$
is an $R[G]$-relation, then
$\Def_{G/N}\Theta=\sum_i NU_i/N - \sum_j NU_j'/N$ is an $R[G/N]$-relation.
\end{lemma}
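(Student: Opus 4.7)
The plan is to apply the $N$-coinvariants functor $M \mapsto M_N := M/\langle (n-1)m : n \in N,\, m \in M\rangle$, which takes $R[G]$-modules to $R[G/N]$-modules. This functor preserves isomorphisms and commutes with direct sums, so it suffices to identify its effect on the permutation modules $R[G/U]$ for arbitrary subgroups $U \le G$.

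The key calculation I will carry out is the identification
$$
R[G/U]_N \;\cong\; R[(G/N)/(NU/N)]
$$
of $R[G/N]$-modules. Indeed, the $N$-orbits on the $G$-set $G/U$ (under left multiplication) are exactly the double cosets $NgU$; since $N$ is normal one has $NgU = gNU$, so these orbits are in bijection with $G/NU$, and via the third isomorphism theorem with $(G/N)/(NU/N)$. Linearising this bijection of $G/N$-sets yields the claimed isomorphism.

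Granting this, applying $(-)_N$ to the defining isomorphism $\bigoplus_i R[G/U_i] \cong \bigoplus_j R[G/U'_j]$ of $R[G]$-modules immediately produces
$$
\bigoplus_i R[(G/N)/(NU_i/N)] \;\cong\; \bigoplus_j R[(G/N)/(NU'_j/N)]
$$
as $R[G/N]$-modules, which is precisely the statement that $\Def_{G/N}\Theta$ is an $R[G/N]$-relation.

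I do not expect a genuine obstacle here: the only substantive content is the double coset computation above, which is routine. One could equivalently phrase the argument via the tensor product $R[G/N] \otimes_{R[G]} R[G/U] \cong R[G/N] \otimes_{R[U]} R$ and compute the right-hand side directly, but the coinvariants formulation seems cleanest and works uniformly for $R = \Q$ and $R = \Z_{(p)}$.
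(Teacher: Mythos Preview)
Your argument is correct: applying the $N$-coinvariants functor and identifying $R[G/U]_N \cong R[(G/N)/(NU/N)]$ via the orbit computation you describe is exactly the right approach, and all the steps go through as you outline. The paper itself does not give a proof of this lemma, stating only that it is routine and leaving it to the reader, so your write-up is a perfectly good way to fill in the details.
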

\begin{lemma}\label{lem:GassmannProduct}
Let $G$ and $\tilde{G}$ be finite groups, let $U-U'$ be an $R[G]$-relation,
and let $\tilde{U}-\tilde{U}'$ be an $R[\tilde{G}]$-relation. Then
$U\times \tilde{U} - U' \times \tilde{U}'$ is a $R[G\times \tilde{G}]$-relation.
\end{lemma}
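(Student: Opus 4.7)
The plan is to reduce the claim to the elementary fact that induction from a product subgroup factors as a tensor product of inductions, under the canonical ring identification $R[G \times \tilde{G}] = R[G] \otimes_R R[\tilde{G}]$.

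First I would observe that the map
$$
(G \times \tilde{G})/(U \times \tilde{U}) \longrightarrow (G/U) \times (\tilde{G}/\tilde{U}), \qquad (g,\tilde{g})(U\times \tilde{U}) \mapsto (gU,\; \tilde{g}\tilde{U}),
$$
is a well-defined bijection of $(G \times \tilde{G})$-sets, where the target carries the componentwise action. Linearising over $R$ yields an isomorphism
$$
R[(G \times \tilde{G})/(U \times \tilde{U})] \;\cong\; R[G/U] \otimes_R R[\tilde{G}/\tilde{U}]
$$
of $R[G \times \tilde{G}]$-modules, and similarly with $U$, $\tilde{U}$ replaced by $U'$, $\tilde{U}'$.

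By hypothesis, there exist an $R[G]$-module isomorphism $R[G/U] \cong R[G/U']$ and an $R[\tilde{G}]$-module isomorphism $R[\tilde{G}/\tilde{U}] \cong R[\tilde{G}/\tilde{U}']$. Tensoring these together over $R$ produces an $R$-linear map that is simultaneously $R[G]$-linear on the first factor and $R[\tilde{G}]$-linear on the second, hence $R[G\times\tilde{G}]$-linear under the identification above. Composing with the two displayed isomorphisms gives the required $R[G\times\tilde{G}]$-module isomorphism between $R[(G\times\tilde{G})/(U\times\tilde{U})]$ and $R[(G\times\tilde{G})/(U'\times\tilde{U}')]$. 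No step of this plan presents any genuine obstacle; the only thing worth verifying carefully is that the componentwise $G\times\tilde{G}$-action on the tensor product matches the action coming from $R[G]\otimes_R R[\tilde{G}]$, which is immediate from the definition of the isomorphism $R[G\times\tilde{G}]\cong R[G]\otimes_R R[\tilde{G}]$.
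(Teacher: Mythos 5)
Your proof is correct. The paper itself does not supply a proof of this lemma (it is dismissed with ``routine, left to the reader''), so there is nothing to compare against; the tensor-product argument you give, resting on the standard identifications $R[G\times\tilde{G}]\cong R[G]\otimes_R R[\tilde{G}]$ and $R[(G\times\tilde{G})/(U\times\tilde{U})]\cong R[G/U]\otimes_R R[\tilde{G}/\tilde{U}]$, is exactly the natural one.
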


In \cite{Sunada} Sunada shows that $\Q[G]$-relations give rise to strongly
isospectral manifolds, as follows.
\begin{theorem}[Sunada, \cite{Sunada}]
Let $G$ be a finite group, let $X\rar Y$ be a $G$-covering of Riemannian manifolds,
and let $U-U'$ be a $\Q[G]$-relation. Then the intermediate coverings $X/U$ and
$X/U'$ are strongly isospectral.
\end{theorem}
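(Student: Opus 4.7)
The plan is to follow Sunada's original strategy: fix a natural self-adjoint elliptic differential operator $D$ on $Y$, lift the problem to $X$, and use representation theory of $G$ to compare the spectra of $D$ on $X/U$ and $X/U'$.

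First I would unpack the relation hypothesis. The condition that $U-U'$ is a $\Q[G]$-relation means $\Q[G/U]\cong \Q[G/U']$ as $\Q[G]$-modules. Extending scalars to $\C$ gives $\C[G/U]\cong \C[G/U']$ as $\C[G]$-modules, equivalently an equality of permutation characters $\permchar{U}=\permchar{U'}$; this is the only group-theoretic input used in the rest of the argument.

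Next I would set up the spectral decomposition on $X$. Naturality of $D$ means it lifts canonically to a $G$-equivariant operator $\tilde D$ on $X$ and descends to operators $D_U$, $D_{U'}$ on $X/U$, $X/U'$. The key functorial fact is that pullback along the finite cover $X\to X/H$ identifies $L^2(X/H)$ with the $H$-invariant subspace $L^2(X)^H$, and under this identification $D_H$ corresponds to $\tilde D$ restricted to $L^2(X)^H$. Since $X$ is compact and $\tilde D$ is self-adjoint and elliptic, $L^2(X)$ decomposes as a Hilbert direct sum of finite-dimensional eigenspaces $E_\lambda$, each naturally a $\C[G]$-module.

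The core of the proof is then Frobenius reciprocity: for any subgroup $H\le G$,
\[
\dim E_\lambda^H \;=\; \dim \mathrm{Hom}_{\C[G]}(\C[G/H], E_\lambda).
\]
Taking $H=U$ and $H=U'$ and using the isomorphism from the first step yields $\dim E_\lambda^U = \dim E_\lambda^{U'}$ for every eigenvalue $\lambda$. Summing over $\lambda$ with multiplicity shows that $D_U$ and $D_{U'}$ have identical spectra with multiplicity, which is exactly strong isospectrality. The main technical point is the naturality/lifting statement in the second paragraph, i.e.\ the verification that any ``natural'' self-adjoint elliptic operator really admits a $G$-equivariant lift that intertwines with pullback under the covering; for the Laplacian this is classical, and the general case is treated in DeTurck--Gordon. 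Once this is granted, the remainder is pure representation theory.
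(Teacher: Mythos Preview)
Your outline is correct and is essentially Sunada's original argument. Note, however, that the paper does not give its own proof of this theorem: it simply states the result and cites \cite{Sunada}. So there is nothing to compare against beyond the observation that what you have sketched is the standard proof from the reference.
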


It follows from the Cheeger--M\"uller Theorem \cite{Cheeger,Mueller1,Mueller2}
that if $M$ and $M'$ are strongly isospectral Riemannian $3$-manifolds,
then
\begin{eqnarray}\label{eq:CheeegerMuller}
\frac{\#H_1(M,\Z)_{\tors}}{\#H_1(M',\Z)_{\tors}}=\frac{\Reg_1(M)^2}{\Reg_1(M')^2},
\end{eqnarray}
where $\Reg_1(M)$ is the covolume of the
lattice~$H_1(M,\Z)/H_1(M,\Z)_{\tors}$ in the vector space $H_1(M,\bR)$ with respect to
a certain canonical inner product, and similarly for $M'$ -- see \cite{us} for details.

\subsection{Regulator constants}
When $M$ and $M'$ arise from a $G$-covering $X\rar Y$ via Sunada's construction,
we relate in \cite{us} the regulator quotient of equation (\ref{eq:CheeegerMuller})
to a certain representation theoretic invariant of $H_1(X,\Q)$, called a regulator
constant. We briefly recall the definition and some of the properties of
this invariant. On the first reading, the definition may be skipped, since only
the properties that we list below will be needed for the rest of the section.

\begin{definition}\label{def:regconst}
Let $G$ be a finite group, let $\Theta=\sum_i U_i - \sum_j U_j'$ be a
$\Q[G]$-relation, and let
$W$ be a finitely generated $\grpalg$-module. Let $\langle\;,\;\rangle$
be a non-degenerate $G$-invariant $\Q$-bilinear pairing on $W$ with values in $\Q$.
The \emph{regulator constant} of $W$ with respect to $\Theta$ is defined as
$$
\cC_{\Theta}(W) = \frac{\prod_i \det\left(\frac{1}{\#U_i}\langle\;,\;
\rangle|W^{U_i}\right)}{\prod_j \det\left(\frac{1}{\#U_j'}\langle\;,\;
\rangle|W^{U_j'}\right)}\in \Q^{\times}/(\Q^{\times})^2.
$$
Here, each determinant is evaluated with respect to an arbitrary basis
of the respective fixed space, and is therefore only well-defined
modulo $(\Q^{\times})^2$.
\end{definition}
\begin{remark}
Let $G$ be a finite group, and let $W$ be a finitely generated $\grpalg$-module.
Choosing a pairing as in Definition \ref{def:regconst} is equivalent to choosing
an isomorphism of $\grpalg$-modules between $W$ and its $\Q$-linear dual.
Since finitely generated $\grpalg$-modules are self-dual, such a pairing always
exists.
\end{remark}
\begin{theorem}\label{thm:indep}
The value of $\cC_{\Theta}(W)$ is independent of the pairing $\langle\;,\;\rangle$.
\end{theorem}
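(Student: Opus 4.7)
The plan is to show that any two non-degenerate $G$-invariant pairings on $V$ differ by a $\grpalg$-automorphism of $V$, and then to exploit the $\grpalg$-relation $\Theta$ to show that such an automorphism contributes trivially to the determinant quotient defining $\cC_\Theta(V)$.

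First, I would fix two non-degenerate $G$-invariant $\Q$-bilinear pairings $\langle\;,\;\rangle_1$ and $\langle\;,\;\rangle_2$ on $V$. Each gives rise to a $\grpalg$-linear isomorphism $\Phi_k\colon V\to V^*$ sending $v\mapsto\langle v,-\rangle_k$. Setting $\phi=\Phi_1^{-1}\circ\Phi_2$ yields a $\grpalg$-automorphism of $V$ satisfying $\langle v,w\rangle_2=\langle\phi(v),w\rangle_1$ for all $v,w\in V$. Since $\phi$ is $G$-equivariant, it restricts to an automorphism of each fixed subspace $V^U$, and comparing Gram matrices in a fixed basis of $V^U$ gives
$$
\det\bigl(\tfrac{1}{\#U}\langle\;,\;\rangle_2\big|_{V^U}\bigr)=\det(\phi|_{V^U})\cdot\det\bigl(\tfrac{1}{\#U}\langle\;,\;\rangle_1\big|_{V^U}\bigr).
$$
Along the way, I would verify that the restriction of a non-degenerate $G$-invariant pairing to $V^U$ is itself non-degenerate, so that these determinants lie in $\Q^\times$; this follows from the identity $\langle v,w\rangle=\langle v,e_U w\rangle$ for $v\in V^U$, where $e_U=\frac{1}{\#U}\sum_{u\in U}u$ is the averaging idempotent. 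Taking the ratio of the two regulator constants then reduces the claim to showing that
$$
R:=\frac{\prod_i\det(\phi|_{V^{U_i}})}{\prod_j\det(\phi|_{V^{U_j'}})}
$$
equals $1$ in $\Q^\times/(\Q^\times)^2$.

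Second, and this is the substance of the argument, I would invoke the hypothesis that $\Theta$ is a $\grpalg$-relation: there is an isomorphism of $\grpalg$-modules $\alpha\colon\bigoplus_i\Q[G/U_i]\to\bigoplus_j\Q[G/U_j']$. Applying the functor $\mathrm{Hom}_{\Q[G]}(-,V)$, together with the canonical identification $\mathrm{Hom}_{\Q[G]}(\Q[G/U],V)\cong V^U$, produces a $\Q$-linear isomorphism $\bigoplus_j V^{U_j'}\xrightarrow{\sim}\bigoplus_i V^{U_i}$. Because this functor is natural in the target variable $V$, the isomorphism intertwines the endomorphisms induced by the $\grpalg$-automorphism $\phi$ on the two sides, so these endomorphisms are conjugate as $\Q$-linear maps and in particular have the same determinant. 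Hence numerator and denominator of $R$ agree even in $\Q^\times$, completing the proof.

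The main obstacle is purely notational: setting up $\phi$ so that its restrictions to the various $V^U$ are precisely what appears when comparing Gram matrices, and being careful that the natural action of $\phi$ on the two sides of the isomorphism coming from $\Theta$ really does correspond under $\alpha^*$. Once the functoriality is correctly identified, the algebraic content is essentially forced, and in fact the independence holds in the stronger form that the ratio of regulator constants is $1$ in $\Q^\times$, not merely modulo squares.
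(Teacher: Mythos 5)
The paper does not prove this statement itself; it simply cites \cite[Theorem~2.17]{tamroot}, so there is no in-paper argument to compare against. Your proposal is a complete and correct proof, and it is essentially the argument that appears in the cited reference of Dokchitser--Dokchitser: two non-degenerate $G$-invariant pairings differ by precomposition with a $\Q[G]$-automorphism $\phi$ of $V$, the Gram determinants on each $V^U$ then change by $\det(\phi|_{V^U})$, and applying $\mathrm{Hom}_{\Q[G]}(-,V)$ to the isomorphism witnessing the relation $\Theta$ shows that $\bigoplus_i\phi|_{V^{U_i}}$ and $\bigoplus_j\phi|_{V^{U_j'}}$ are conjugate, so the determinant ratio $R$ is exactly $1$. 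All the supporting details you flag are handled correctly: the averaging-idempotent identity $\langle v,w\rangle=\langle v,e_Uw\rangle$ for $v\in V^U$ does show non-degeneracy of the restriction to $V^U$, and the intertwining is the standard bifunctoriality of $\mathrm{Hom}$ (post-composition by $\phi$ commutes with pre-composition by $\alpha$). One small point of phrasing: the individual regulator-constant expressions are only defined modulo $(\Q^\times)^2$ because of the basis choices, so the claim ``the ratio is $1$ in $\Q^\times$'' should be read as: computing both constants with the \emph{same} bases, the ratio is literally $1$; this is indeed what your argument shows, and it implies the statement modulo squares as required.
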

\begin{proof}
See \cite[Theorem 2.17]{tamroot}.
\end{proof}
Theorem \ref{thm:indep} justifies the notation $\cC_{\Theta}(W)$, which makes
no reference to the pairing.
\begin{example}\label{ex:GL}
Let $p$ be an odd prime number, and let $G_p=\GL_2(\F_p)$ be the group of
invertible $2\times 2$ matrices over the finite field with $p$ elements.
Consider the following subgroups of $G_p$:
$$
B_p=\begin{pmatrix}\F_p^\times &\F_p\\0& \F_p^\times\end{pmatrix},\;\;
U_p=\begin{pmatrix}(\F_p^\times)^2 &\F_p\\0& \F_p^\times\end{pmatrix},\;\;
U_p'=\begin{pmatrix}\F_p^\times &\F_p\\0& (\F_p^\times)^2\end{pmatrix}.
$$
The permutation module $\Q[G_p/U_p]$ decomposes as a direct sum $\Q[G_p/U_p]\cong
\Q[G_p/B_p]\oplus I_p$,
where $I_p$ is a simple $\Q[G_p]$-module of dimension $(p+1)$ over~$\Q$.
Moreover, the formal linear combination $\Theta=U_p-U_p'$ is a $\Q[G_p]$-relation,
and for every prime number $q\neq p$, it is a $\Z_{(q)}[G_p]$-relation.
In \cite[Proposition~4.2]{us} we showed that
$\cC_{\Theta}(I_p)\equiv p\pmod{(\Q^\times)^2}$.
\end{example}

\begin{example}\label{ex:2-Gassmann}
Let $G_2$ be the affine linear group over $\Z/8\Z$, i.e. the group of linear
transformations $T_{a,b}\colon x\mapsto ax+b$ of $\Z/8\Z$, where $a\in (\Z/8\Z)^{\times}$
and~$b\in \Z/8\Z$. Consider the following subgroups of $G_2$:
\begin{eqnarray*}
U_2 & = & \langle T_{a,0}\colon a \in (\Z/8\Z)^\times\rangle,\\
U_2' & = & \langle T_{3,4}, T_{-1,0} \rangle,\\
B_2 & = & \langle T_{3,4}, T_{a,0}\colon a \in (\Z/8\Z)^\times\rangle.
\end{eqnarray*}
The group $G_2$ is isomorphic to the semidirect product $\Z/8\Z\rtimes (\Z/8\Z)^\times$;
the subgroups $U_2$ and $U_2'$ are both isomorphic to $C_2\times C_2$;
$\Theta=U_2-U_2'$ is a $\Q[G_2]$-relation, and for every odd prime number $q$, it
is a $\Z_{(q)}[G_2]$-relation.
Moreover, $\Q[G_2/U_2]$ decomposes as a direct sum $\Q[G_2/U_2]\cong \Q[G_2/B_2]\oplus I_2$,
where $I_2$ is a simple $\Q[G_2]$-module of dimension 4 over $\Q$, and
one can show by a direct computation that $\cC_{\Theta}(I_2)\equiv 2\pmod{(\Q^\times)^2}$.
\end{example}\noindent
Regulator constants satisfy the following properties:
\begin{enumerate}[label={(Reg \arabic*)}]
\item\label{item:Reg1} if $G$ is a finite group, $N$ is a normal subgroup,
$\Theta$ is a $\Q[G]$-relation,
$W$ is a $\Q[G/N]$-module, and $\Inf_{G/N}W$ is the lift of $W$
to a $\grpalg$-module, then $\cC_{\Theta}(\Inf_{G/N}W)=\cC_{\Def_{G/N}\Theta}(W)$;
\item\label{item:Reg2} if $G$ is a finite group, $\Theta$ is a $\Q[G]$-relation,
and $W_1$, $W_2$ are $\grpalg$-modules, then $\cC_{\Theta}(W_1\oplus W_2)=
\cC_{\Theta}(W_1)\cdot\cC_{\Theta}(W_2)$.
\end{enumerate}

\begin{lem}\label{lem:biggroup}
  Let~$\cP$ be a finite set of prime numbers. Then there exist a finite
  group~$G$, a~$\Q[G]$-relation~$\Theta=U-U'$, and a~$\Q[G]$-module~$W$, such
  that
  \begin{enumerate}[leftmargin=*]
    \item\label{item:regcst} we have
      \[
        \cC_\Theta(W) \equiv \prod_{p\in\cP}p \pmod{(\Q^\times)^2};
      \]
    \item\label{item:qrel} for all prime numbers $q\not\in \cP$, the relation~$\Theta$ is
      a~$\Z_{(q)}[G]$-relation.
  \end{enumerate}
\end{lem}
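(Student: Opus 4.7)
The plan is to reduce the lemma to Examples \ref{ex:GL} and \ref{ex:2-Gassmann} via a product construction and an inflation/deflation computation.

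First, for each $p\in \cP$ I would invoke the appropriate example to produce a local piece: if $p$ is odd, take $G_p=\GL_2(\F_p)$ with subgroups $U_p,U_p'$ and simple module $I_p$ from Example~\ref{ex:GL}; if $2\in \cP$, take $G_2$, $U_2$, $U_2'$, $I_2$ as in Example~\ref{ex:2-Gassmann}. In each case we have a $\Q[G_p]$-relation $\Theta_p=U_p-U_p'$ which is in fact a $\Z_{(q)}[G_p]$-relation for every prime $q\neq p$, together with $\cC_{\Theta_p}(I_p)\equiv p\pmod{(\Q^\times)^2}$.

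Next I would set $G=\prod_{p\in \cP}G_p$, $U=\prod_{p\in \cP}U_p$, $U'=\prod_{p\in \cP}U_p'$, and $\Theta=U-U'$. Iterating Lemma~\ref{lem:GassmannProduct} (applied with $R=\Q$, and then with $R=\Z_{(q)}$ for each $q\notin \cP$; note that since $q\neq p$ for every $p\in \cP$, each factor $\Theta_p$ is a $\Z_{(q)}[G_p]$-relation, so the hypothesis of Lemma~\ref{lem:GassmannProduct} is satisfied at every stage) shows that $\Theta$ is a $\Q[G]$-relation and, for every $q\notin \cP$, a $\Z_{(q)}[G]$-relation. This is property~\ref{item:qrel}.

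For the module, let $\pi_p\colon G\twoheadrightarrow G_p$ be the projection with kernel $N_p=\prod_{r\neq p}G_r$, and let $V_p=\Inf_{G/N_p}I_p$, viewed as a $\Q[G]$-module via the identification $G/N_p\cong G_p$. Then take $V=\bigoplus_{p\in \cP}V_p$. By property \ref{item:Reg2} we have $\cC_\Theta(V)\equiv \prod_{p\in \cP}\cC_\Theta(V_p)\pmod{(\Q^\times)^2}$, and by property \ref{item:Reg1} each factor equals $\cC_{\Def_{G/N_p}\Theta}(I_p)$. The routine but essential step is to check that $\Def_{G/N_p}\Theta=\Theta_p$: indeed, for any subgroup $W=\prod_rW_r$ of $G$ the image $N_pW/N_p\subseteq G/N_p\cong G_p$ equals $W_p$, so $\Def_{G/N_p}U=U_p$ and similarly for $U'$. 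Plugging in the values from the two examples yields $\cC_\Theta(V)\equiv \prod_{p\in \cP}p\pmod{(\Q^\times)^2}$, establishing \ref{item:regcst}.

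The only mild obstacle is the bookkeeping in the deflation computation, together with verifying that Lemma~\ref{lem:GassmannProduct} applies in the $\Z_{(q)}$-localised setting; both are straightforward once the definitions are written out, so I expect no genuine difficulty beyond assembling the ingredients already listed in the previous subsection.
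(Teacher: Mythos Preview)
Your proposal is correct and follows essentially the same argument as the paper: form the product group $G=\prod_{p\in\cP}G_p$, take $U,U'$ as the products of the local subgroups, use Lemma~\ref{lem:GassmannProduct} for both the $\Q$- and $\Z_{(q)}$-relations, and compute the regulator constant via \ref{item:Reg2}, \ref{item:Reg1}, and the deflation identity $\Def_{G/N_p}\Theta=\Theta_p$. If anything, you are slightly more explicit than the paper in verifying the deflation step.
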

\begin{proof}
Let $G=\prod_{p\in\cP} G_p$, where $G_p$ is as in Example~\ref{ex:GL}
when~$p$ is odd, and as in Example~\ref{ex:2-Gassmann} when $p=2$.
For each~$p\in\cP$, let $N_p$ denote the kernel of the projection map~$G\to
G_p$, so that the quotient $G/N_p$ is isomorphic to~$G_{p}$.

We may lift the module $I_{p}$ of Example~\ref{ex:GL}, respectively \ref{ex:2-Gassmann} from $G/N_p$ to a $\grpalg$-module $\Inf_{G/N_p}I_{p}$. Let $W$ be
the direct sum of $\grpalg$-modules $W=\bigoplus_{p\in\cP} \Inf_{G/N_p}I_{p}$.
Let $U=\prod_{p\in\cP} U_{p}\leq G$, where the subgroups~$U_{p}\leq G_{p}$ are
as in Example~\ref{ex:GL}, respectively~\ref{ex:2-Gassmann},
and define $U'$ analogously. So for every $p\in \cP$, the image
of $U$ under the quotient map $G\rar G/N_p$ is $U_{p}$, and
the image of $U'$ is $U_{p}'$.

By Lemma \ref{lem:GassmannProduct},
the formal linear combination $\Theta=U-U'$ is a $\Q[G]$-relation,
and for every prime number $q\not\in \cP$, it is also a $\Z_{(q)}[G]$-relation.
This proves the second part of the lemma.

By property~\ref{item:Reg2}, property~\ref{item:Reg1}, and
Examples~\ref{ex:GL} and~\ref{ex:2-Gassmann}, in that order, we have
\begin{eqnarray*}
  \cC_{\Theta}(W) & \equiv & \prod_{p\in\cP} \cC_{\Theta}(\Inf_{G/N_p}I_{p})
                  \equiv \prod_{p\in\cP} \cC_{\Def_{G/N_p}\Theta}(I_{p})\\
                  & \equiv & \prod_{p\in\cP} p \pmod{(\Q^\times)^2},
\end{eqnarray*}
which proves the first part of the lemma.
\end{proof}

\subsection{Isospectral manifolds}
The following two results are the crucial ingredients that will allow us to deduce Theorem
\ref{thm:introsiso} from Theorem \ref{thm:intromain}.
\begin{proposition}\label{prop:regquo}
Let $G$ be a finite group, let $X\to Y$ be a~$G$-covering of Riemannian
manifolds, and let~$\Theta=U - U'$ be a $\Q[G]$-relation. Then
$\frac{\Reg_1(X/U)^2}{\Reg_1(X/U')^2}\in \Q^{\times}$, and we~have
$$
\frac{\Reg_1(X/U)^2}{\Reg_1(X/U')^2}\equiv \cC_{\Theta}(H_1(X,\Q)) \pmod{(\Q^\times)^2}.
$$
\end{proposition}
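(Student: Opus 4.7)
The strategy is to identify each regulator squared with a Gram determinant of an appropriate pairing on a fixed-point subspace of $V=H_1(X,\Q)$, and then to observe that the resulting ratio matches the definition of the regulator constant on the nose.

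First I would fix a non-degenerate $G$-invariant $\Q$-bilinear pairing $\langle\,,\,\rangle$ on $V=H_1(X,\Q)$; this exists since $V$ is a finitely generated $\grpalg$-module and such modules are self-dual. Since $G$ acts freely on $X$, so does every subgroup $U\leq G$, so the covering map $X\to X/U$ induces an isomorphism of $\Q$-vector spaces $H_1(X/U,\Q)\cong V^U$. The key geometric input, which I would quote from \cite{us} (it is the same identification used to derive the Cheeger--M\"uller identity \eqref{eq:CheeegerMuller} in our regulator-constant formalism), is that under this identification the canonical inner product on $H_1(X/U,\R)$ used to define $\Reg_1(X/U)$ corresponds to the restriction of $\tfrac{1}{\#U}\langle\,,\,\rangle$ (scalar-extended to $\R$) to $V^U$, the factor $1/\#U$ being the standard volume rescaling on an $\#U$-sheeted cover.

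Next I would pass from lattices to the rational pairing. By definition, $\Reg_1(X/U)^2$ is the Gram determinant of $\tfrac{1}{\#U}\langle\,,\,\rangle$ on any $\Z$-basis of the free part of $H_1(X/U,\Z)$, which sits as a full-rank lattice in $V^U$. Any two full-rank $\Z$-lattices in $V^U$ are commensurable, and the Gram determinants computed with respect to bases of commensurable lattices differ by the square of a nonzero rational. Hence both $\Reg_1(X/U)^2$ and $\Reg_1(X/U')^2$ individually lie in $\Q^\times$, which already yields the rationality statement, and modulo $(\Q^\times)^2$ we have
\[
\Reg_1(X/U)^2\equiv \det\bigl(\tfrac{1}{\#U}\langle\,,\,\rangle|_{V^U}\bigr),\qquad \Reg_1(X/U')^2\equiv \det\bigl(\tfrac{1}{\#U'}\langle\,,\,\rangle|_{V^{U'}}\bigr),
\]
evaluated in any $\Q$-basis of the respective fixed spaces. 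Taking the quotient and comparing with Definition \ref{def:regconst} gives
\[
\frac{\Reg_1(X/U)^2}{\Reg_1(X/U')^2}\equiv \frac{\det\bigl(\tfrac{1}{\#U}\langle\,,\,\rangle|_{V^U}\bigr)}{\det\bigl(\tfrac{1}{\#U'}\langle\,,\,\rangle|_{V^{U'}}\bigr)}=\cC_\Theta(V)\pmod{(\Q^\times)^2},
\]
and the fact that $\cC_\Theta(V)$ does not depend on the choice of $\langle\,,\,\rangle$ makes the argument independent of that choice.

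The main obstacle is the geometric identification in the first step: matching the canonical Hodge-theoretic inner product on $H_1(X/U,\R)$ with the restriction of $\tfrac{1}{\#U}$ times the pairing on $H_1(X,\R)$ under the isomorphism $H_1(X/U,\R)\cong H_1(X,\R)^U$. This is a direct computation with the $L^2$ inner product on harmonic $1$-forms under a finite Riemannian cover, carried out in \cite{us}, and I would invoke it verbatim. Once the scaling factor $1/\#U$ is pinned down, the rest of the argument is a short exercise in linear algebra over $\Q$, using only the commensurability of integer lattices in $V^U$ and $V^{U'}$ and the definition of $\cC_\Theta$.
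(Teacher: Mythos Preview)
The paper's proof is a one-line citation to \cite[Corollary~3.12]{us}; your proposal sketches what that argument ought to be, and the overall shape is right, but there is a real gap in how you handle the pairing.

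You fix an \emph{arbitrary} $G$-invariant $\Q$-bilinear form $\langle\,,\,\rangle$ on $V$ and then assert that the canonical Hodge inner product on $H_1(X/U,\R)$ equals the restriction of $\tfrac{1}{\#U}\langle\,,\,\rangle$ (extended to $\R$). That is false: the Hodge inner product is determined by the Riemannian metric and is typically irrational on integral classes, so it cannot coincide with the scalar extension of a $\Q$-form chosen in advance. In particular your conclusion that each $\Reg_1(X/U)^2$ individually lies in $\Q^\times$ is wrong --- individual regulators of hyperbolic manifolds are transcendental in general. What is true, and what \cite{us} actually proves, is that the canonical inner product on $H_1(X/U,\R)$ equals $\tfrac{1}{\#U}$ times the restriction to $V^U$ of the canonical inner product on $H_1(X,\R)$: the same metric pairing upstairs, not an arbitrary rational one.

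That identification expresses the regulator \emph{ratio} as the quantity in Definition~\ref{def:regconst} computed with the canonical \emph{real} pairing and integral bases. The step you are missing is the passage from this real pairing to a rational one: with the bases held fixed, the Gram-determinant ratio is in fact independent of the choice of $G$-invariant non-degenerate pairing on the nose (not merely modulo squares), because changing the pairing corresponds to a $G$-equivariant automorphism $A$ of $V\otimes\R$, and $\det(A|_{V^U})$ depends only on the isomorphism class of the permutation module $\R[G/U]$. Only after making this swap does one see that the ratio lies in $\Q^\times$ and recognise it, modulo $(\Q^\times)^2$, as $\cC_\Theta(V)$.
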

\begin{proof}
This is a special case of \cite[Corollary 3.12]{us}.
\end{proof}
\begin{proposition}\label{prop:Z_qrel}
Let $G$ be a finite group, let $X\rar Y$ be a~$G$-covering of Riemannian
manifolds, let $q$ be a prime number, and let $\Theta=U - U'$ be a
$\Z_{(q)}[G]$-relation. Then we have
$$
H_1(X/U,\Z)[q^\infty]\cong H_1(X/U',\Z)[q^\infty].
$$
\end{proposition}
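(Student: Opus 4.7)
The plan is to reduce the statement to a chain-level comparison and then to transport the $\Z_{(q)}[G]$-module isomorphism $\Z_{(q)}[G/U]\cong\Z_{(q)}[G/U']$ supplied by the relation through it. First I would pass from integral $q$-primary torsion to $\Z_{(q)}$-coefficient homology. For any finitely generated abelian group $A$ there is a canonical isomorphism $A\otimes_{\Z}\Z_{(q)}\cong \Z_{(q)}^{\mathrm{rank}\,A}\oplus A[q^\infty]$, so $A[q^\infty]$ is functorially recovered as the torsion subgroup of $A\otimes\Z_{(q)}$. Since $\Z_{(q)}$ is flat over $\Z$, the universal coefficient theorem gives $H_1(X/U,\Z_{(q)})\cong H_1(X/U,\Z)\otimes\Z_{(q)}$, and similarly for $U'$. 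This reduces the proposition to producing an isomorphism of abelian groups $H_1(X/U,\Z_{(q)})\cong H_1(X/U',\Z_{(q)})$.

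Next I would identify these homology groups at the chain level. Because $X\to Y$ is a regular $G$-covering, $G$ acts freely on $X$, so one may choose a $G$-equivariant CW (or smooth triangulation) structure on $X$; the resulting cellular chain complex $C_*(X)$ is then a complex of free $\Z[G]$-modules, and $C_*(X,\Z_{(q)})=C_*(X)\otimes_\Z\Z_{(q)}$ is a complex of free $\Z_{(q)}[G]$-modules. For any subgroup $H\le G$, the induced CW structure on $X/H$ yields a canonical identification of chain complexes
\[
  C_*(X/H,\Z_{(q)})\;\cong\;\Z_{(q)}[G/H]\otimes_{\Z_{(q)}[G]}C_*(X,\Z_{(q)}),
\]
where $\Z_{(q)}[G/H]$ is viewed as a right $\Z_{(q)}[G]$-module in the standard way via the anti-involution $g\mapsto g^{-1}$ (equivalently, the right-hand side is the complex of coinvariants $C_*(X,\Z_{(q)})_H$). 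The same identification holds with $H$ replaced by $U'$.

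Finally, the hypothesis that $\Theta=U-U'$ is a $\Z_{(q)}[G]$-relation provides, by definition, an isomorphism of $\Z_{(q)}[G]$-modules $\Z_{(q)}[G/U]\cong \Z_{(q)}[G/U']$. Applying the functor $(\,\cdot\,)\otimes_{\Z_{(q)}[G]}C_*(X,\Z_{(q)})$ to this isomorphism yields an isomorphism of chain complexes $C_*(X/U,\Z_{(q)})\cong C_*(X/U',\Z_{(q)})$, and passage to $H_1$ combined with the first step completes the proof.

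The hard part is really just the chain-level identification in the second paragraph, which relies essentially on the freeness of the $G$-action; once this is in place the argument is purely formal, since tensoring a genuine module isomorphism against a fixed chain complex always produces an isomorphism of chain complexes, and then $H_1$ is functorial. The only further subtlety is bookkeeping about left/right module conventions, which is handled in the standard way by the anti-involution above.
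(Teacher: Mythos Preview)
Your argument is correct. The paper itself does not give a proof here: it simply cites the result as a special case of \cite[Theorem~3.5]{us}. Your route---localising at $q$ to reduce to $\Z_{(q)}$-coefficients, choosing a $G$-CW structure so that $C_*(X,\Z_{(q)})$ is a complex of free $\Z_{(q)}[G]$-modules, identifying $C_*(X/H,\Z_{(q)})$ with the coinvariants $\Z_{(q)}[G/H]\otimes_{\Z_{(q)}[G]}C_*(X,\Z_{(q)})$, and then tensoring the module isomorphism supplied by the relation---is precisely the standard argument for results of this type, and is essentially how the cited theorem is proved as well. One small point worth making explicit: your first reduction step uses that $H_1(X/U,\Z)$ is finitely generated, which holds because the paper's standing convention is that all manifolds are closed.
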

\begin{proof}
This is a special case of \cite[Theorem 3.5]{us}.
\end{proof}

We can now prove Theorem \ref{thm:introsiso}. We recall the statement.
\begin{theorem}
  Let $\cP$ be a finite set of prime numbers. Then
  there exist \decent~$3$-manifolds $M$ and $M'$ that are strongly isospectral with
  respect to hyperbolic metrics and such that
  \begin{enumerate}[leftmargin=*]
  \item for all $p\in \cP$ we have
  $$
  \#H_1(M,\Z)[p^\infty]\neq \#H_1(M',\Z)[p^\infty];
  $$
  \item for all prime numbers $q\not\in \cP$ we have an isomorphism of Abelian groups
  $$
  H_1(M,\Z)[q^\infty]\cong H_1(M',\Z)[q^\infty].
  $$
\end{enumerate}
\end{theorem}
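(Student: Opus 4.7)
The plan is to assemble the ingredients already in place: use Lemma~\ref{lem:biggroup} to produce a suitable $(G,\Theta,V)$, realise $V$ as $H_1(X,\Q)$ of a hyperbolic $3$-manifold via Theorem~\ref{thm:intromain}, and then let $M=X/U$, $M'=X/U'$ where $\Theta=U-U'$. The Sunada machine will give strong isospectrality, while the Cheeger--M\"uller formula together with the regulator constant computation will give part (1), and the $\Z_{(q)}$-relation part of Lemma~\ref{lem:biggroup} together with Proposition~\ref{prop:Z_qrel} will give part (2).

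In more detail, I first apply Lemma~\ref{lem:biggroup} to obtain a finite group $G$, a $\Q[G]$-relation $\Theta=U-U'$, and a $\Q[G]$-module $V$ such that $\cC_\Theta(V)\equiv\prod_{p\in\cP}p\pmod{(\Q^\times)^2}$ and such that $\Theta$ is a $\Z_{(q)}[G]$-relation for all $q\notin\cP$. By Theorem~\ref{thm:intromain} there exists a closed hyperbolic $3$-manifold $X$ with a free $G$-action such that $H_1(X,\Q)\cong V$ as $\grpalg$-modules. Since $G$ acts freely and by orientation preserving isometries, for every subgroup $H\le G$ the quotient $X/H$ is again a closed connected orientable hyperbolic $3$-manifold; in particular this applies to $M=X/U$ and $M'=X/U'$. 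By Sunada's theorem, $M$ and $M'$ are strongly isospectral.

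For part (1), combine the Cheeger--M\"uller formula~(\ref{eq:CheeegerMuller}) with Proposition~\ref{prop:regquo} and Lemma~\ref{lem:biggroup}(\ref{item:regcst}) to get
\[
  \frac{\#H_1(M,\Z)_{\tors}}{\#H_1(M',\Z)_{\tors}}
  =\frac{\Reg_1(M)^2}{\Reg_1(M')^2}
  \equiv \cC_\Theta(V)
  \equiv \prod_{p\in\cP}p \pmod{(\Q^\times)^2}.
\]
Since $\prod_{p\in\cP}p$ is squarefree, for each $p\in\cP$ the $p$-adic valuation of the rational number $\#H_1(M,\Z)_{\tors}/\#H_1(M',\Z)_{\tors}$ is odd, and in particular nonzero; hence $\#H_1(M,\Z)[p^\infty]\neq\#H_1(M',\Z)[p^\infty]$. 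For part (2), fix $q\notin\cP$; by Lemma~\ref{lem:biggroup}(\ref{item:qrel}), $\Theta$ is a $\Z_{(q)}[G]$-relation, so Proposition~\ref{prop:Z_qrel} applied to the covering $X\to X/G$ yields $H_1(M,\Z)[q^\infty]\cong H_1(M',\Z)[q^\infty]$.

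Given the preparatory work already done, there is no real obstacle: the only thing to verify carefully is that $M$ and $M'$ are indeed closed connected orientable hyperbolic $3$-manifolds (which follows from freeness and orientation-preservation of the $G$-action on $X$), and that the squarefreeness of $\prod_{p\in\cP}p$ forces odd $p$-adic valuations in the regulator-constant congruence, which is what converts ``equal modulo squares'' into ``unequal $p$-power torsion orders''.
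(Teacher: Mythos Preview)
Your proof is correct and follows essentially the same route as the paper's: apply Lemma~\ref{lem:biggroup}, realise $V$ via Theorem~\ref{thm:intromain}, set $M=X/U$, $M'=X/U'$, and invoke Sunada, Cheeger--M\"uller/Proposition~\ref{prop:regquo}, and Proposition~\ref{prop:Z_qrel}. You in fact spell out two points the paper leaves implicit---the invocation of Sunada's theorem for strong isospectrality, and the passage from the congruence $\prod_{p\in\cP}p\pmod{(\Q^\times)^2}$ to inequality of $p$-power torsion orders via odd $p$-adic valuation---so your write-up is slightly more complete.
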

\begin{proof}
  Let~$G$, $U$, $U'$, $\Theta$, and~$W$ be as in Lemma~\ref{lem:biggroup} applied to the set~$\cP$.
  By Theorem \ref{thm:intromain}, there exists a closed hyperbolic
  $3$-manifold~$X$ with a free $G$-action such that there is an isomorphism
  of $\grpalg$-modules $H_1(X,\Q)\cong W$. Let $M=X/U$ and $M'=X/U'$. The
  second part of the theorem immediately follows from
  Lemma~\ref{lem:biggroup}~(\ref{item:qrel}) and
  Proposition~\ref{prop:Z_qrel}.

  To prove the first part, we
  invoke equation (\ref{eq:CheeegerMuller}), Proposition \ref{prop:regquo},
  and Lemma~\ref{lem:biggroup}~(\ref{item:regcst}), in that order, to conclude that
  \begin{eqnarray*}
  \frac{\#H_1(M,\Z)_{\tors}}{\#H_1(M',\Z)_{\tors}} & = &  \frac{\Reg_1(M)^2}{\Reg_1(M')^2}\\
  & \equiv & \cC_{\Theta}(H_1(X,\Q)) = \cC_{\Theta}(W)\\
  & \equiv & \prod_i p_i \pmod{(\Q^\times)^2},
  \end{eqnarray*}

  which completes the proof.
\end{proof}

\end{document}